\documentclass[a4paper,11pt]{amsart} 
\usepackage{amsmath,amsfonts,amssymb,graphicx,url,amsthm,hyperref}
\usepackage[capitalize,nameinlink]{cleveref}
\usepackage[left=3cm,right=3cm,top=3cm,bottom=3cm]{geometry}
\usepackage[nocompress]{cite}
\usepackage[T1]{fontenc}
\usepackage{caption}
\usepackage{subcaption}
\usepackage{graphicx,color,nicefrac} 
\usepackage{amssymb, amsmath, amsbsy,mathtools}
\usepackage{enumerate}
\usepackage{amsmath,amsfonts}
\usepackage[export]{adjustbox}
\usepackage{booktabs}
\usepackage{leftindex}
\usepackage{tikz}
\usepackage{pgf}
\usepackage{import}
\usepackage{comment}

\usepackage{algorithm}
\usepackage{algpseudocode}
\usepackage{bm} 
\usepackage{dsfont}


\usepackage{algorithm}
\usepackage{algpseudocode}
\usepackage{xcolor}
\usepackage{tikz,pgfplots,pgfplotstable}
\pgfplotsset{compat=newest}
\usetikzlibrary{arrows,decorations,backgrounds,positioning,calc,matrix}
\usepackage[mathlines]{lineno} 
\overfullrule1pt
\linespread{1.1}
\usepackage{multirow}
\usepackage{booktabs}
\newtheorem{theorem}{Theorem}[section]
\newtheorem{lemma}[theorem]{Lemma}
\newtheorem{corollary}[theorem]{Corollary}

\newtheorem{definition}[theorem]{Definition}

\newtheorem{remark}[theorem]{Remark}

\def\letters{a,b,c,d,e,f,g,h,i,j,k,l,m,n,o,p,q,r,s,t,u,v,w,x,y,z}
\def\Letters{A,B,C,D,E,F,G,H,I,J,K,L,M,N,O,P,Q,R,S,T,U,V,W,X,Y,Z}
\makeatletter
\@for \@l:=\Letters \do{%
  \expandafter\edef\csname\@l bb\endcsname{%
  \noexpand\ensuremath{\noexpand\mathbb{\@l}}}%
  \expandafter\edef\csname\@l bf\endcsname{{\noexpand\bf \@l}}%
  \expandafter\edef\csname\@l cal\endcsname{%
  \noexpand\ensuremath{\noexpand\mathcal{\@l}}}%
  \expandafter\edef\csname\@l eu\endcsname{%
  \noexpand\ensuremath{\noexpand\EuScript{\@l}}}%
  \expandafter\edef\csname\@l frak\endcsname{%
  \noexpand\ensuremath{\noexpand\mathfrak{\@l}}}%
  \expandafter\edef\csname\@l rm\endcsname{{\noexpand\rm \@l}}%
  \expandafter\edef\csname\@l scr\endcsname{%
  \noexpand\ensuremath{\noexpand\mathscr{\@l}}}%
}
\@for \@l:=\letters \do{%
  \expandafter\edef\csname\@l bf\endcsname{{\noexpand\bf \@l}}%
  \expandafter\edef\csname\@l frak\endcsname{%
  \noexpand\ensuremath{\noexpand\mathfrak{\@l}}}%
  \expandafter\edef\csname\@l scr\endcsname{%
  \noexpand\ensuremath{\noexpand\mathscr{\@l}}}%
}
\makeatother
\definecolor{shadecolor}{rgb}{0.6, 0.6, 0.6} 
\definecolor{darkgreen}{rgb}{0, 0.6, 0}
\newcommand{\isdef}{\mathrel{\mathrel{\mathop:}=}}

\newcommand{\R}{\mathbb{R}}
\newcommand{\N}{\mathbb{N}}

\renewcommand{\d}{\operatorname{d}\!}
\newcommand{\bs}{\boldsymbol}
\DeclareMathOperator{\diam}{diam}

\DeclareMathOperator{\spn}{span}

\begin{document}
\title[Multiresolution local smoothness detection]{%
Multiresolution local smoothness detection in non-uniformly
sampled multivariate signals}
\author{Sara Avesani}
\address{
Sara Avesani,
IDSIA USI-SUPSI,
Universit\`a della Svizzera italiana,
Via la Santa 1, 6962 Lugano, Switzerland.}
\email{sara.avesani@usi.ch}

\author{Gianluca Giacchi}
\address{
Gianluca Giacchi,
IDSIA USI-SUPSI,
Universit\`a della Svizzera italiana,
Via la Santa 1, 6962 Lugano, Switzerland.}
\email{gianluca.giacchi@usi.ch}

\author{Michael Multerer}
\address{
Michael Multerer,
IDSIA USI-SUPSI,
Universit\`a della Svizzera italiana,
Via la Santa 1, 6962 Lugano, Switzerland.}
\email{michael.multerer@usi.ch}

\begin{abstract}
Inspired by edge detection based on the decay behavior of wavelet coefficients,
we introduce a (near) linear-time algorithm for detecting the local regularity
in non-uniformly sampled multivariate signals. Our approach quantifies
regularity within the framework of microlocal spaces introduced by Jaffard.
The central tool in our analysis is the fast samplet transform, a
distributional wavelet transform tailored to scattered data. We establish a
connection between the decay of samplet coefficients and the pointwise
regularity of multivariate signals. As a by product, we derive decay estimates
for functions belonging to classical H\"older spaces and Sobolev-Slobodeckij
spaces.
While traditional wavelets are effective for regularity detection in
low-dimensional structured data,
samplets demonstrate robust performance even for higher dimensional and 
scattered data. To illustrate our theoretical findings, we present extensive
numerical studies detecting local regularity of one-, two- and
three-dimensional signals, ranging from non-uniformly sampled time series over image
segmentation to edge detection in point clouds.
\end{abstract}

\maketitle

\section{Introduction}
A typical trait of real-world signals is their varying local regularity, as
they feature both smooth regions and abrupt discontinuities, or singularities.
In visual data analysis, these discontinuities correspond to \emph{edges}, which
are boundaries that separate distinct regions or objects. The task of
identifying such edges, known in computer graphics as \emph{edge detection},
has been a central challenge in computer vision for decades, leading to the
development of numerous methods for detecting discontinuities,
see \cite{torre1986edge, sun2022survey, jing2022recent} for an overview. 
Differently from earlier works, our goal is not only to detect discontinuities in 
non-uniformly sampled signals, but also singularities in their derivatives.
For this purpose, we develop an algorithm for efficiently detecting local
smoothness in multivariate signals, leveraging a discrete multiresolution
analysis technique tailored for scattered data,
the so-called {\em samplet transform}, see \cite{harbrecht2022samplets}.

Multiresolution analysis has emerged as a powerful tool for edge detection,
developing a mathematical framework for decomposing signals and images into
multiple levels of detail. Its most widely used tool, the
\emph{(fast) wavelet transform}, is known for its ability to capture both spatial
and frequency information simultaneously, see
\cite{jawerth1994overview, mallat1999wavelet}, and has been extensively applied
to edge detection, as in
\cite{grossmann1988wavelet, li2003wavelet, zhang2002edge, sun2004multiscale}. 
A key principle underlying wavelet analysis is that the smoothness of a function
directly relates to how quickly its wavelet coefficients decay: smooth regions
exhibit rapid coefficient decay across scales, while non-smooth regions are
characterized by slowly decaying coefficients. A pioneering work in local
smoothness detection is \cite{mallat1992characterization}, where singularities
of one‐dimensional signals are observed to correspond exactly to the local
maxima of its (continuous) wavelet transform across scales. 
By analyzing the rate of decay of the wavelet coefficients along these maxima,
the authors measured the local H\"older exponents.
Even though tailored to the analysis of structured data and primarily
developed for univariate signals or images, this method has been very
influential in local singularity detection. 

For smoothness detection in possibly multivariate, non-uniformly sampled
signals and, more generally, scattered data, various kernel-based methods have
been developed so far, cf.\ 
\cite{jung2009iterative, lenarduzzi2017kernel, perona1992steerable,
de2020shape, zhang2017noise}. 
These methods use kernel functions, often radial basis functions, to measure
the similarity or influence between data points. Specifically, kernel-based
filters have become popular in image processing for tasks such as smoothing
and sharpening, achieved by convolving the image with a predefined kernel. 
While these methods offer powerful capabilities, their performance is impacted
by the ill-conditioning of kernel matrices and the sensitivity to the choice
of kernel length-scale parameters. In recent years, neural network approaches
have been developed for kernel-based edge detection methods in the framework of
scattered data, see, for example, \cite{wang2016edge, bertasius2015deepedge,
ganin2014fields, hwang2015pixel, shen2015deepcontour, xie2015holistically,
elharrouss2023refined, soria2023dense} and the references therein.
These methods demonstrate the strength of data-driven approaches, though they
typically rely on large training datasets, cf.\ \cite{dorafshan2018comparison}.

In this work, we develop a data-driven deterministic multiresolution approach
designed not only to detect singularities, but also to classify their type. 
To overcome the limitation of wavelets requiring structured low-dimensional
data, we employ \emph{samplets}, which are discrete, localized signed measures
specifically constructed on the underlying scattered data. They can be
considered as \emph{distributional wavelets}, meaning that mother wavelets are
allowed to be linear combinations of Dirac-$\delta$-distributions. Samplets
find applications in compressed sensing, see \cite{DVD}, deepfake detection,
see \cite{huang2025anisotropic} and kernel learning, see
\cite{avesani2024multiscale,HMSS24}. For our pointwise smoothness analysis, we
consider the microlocal spaces defined by Jaffard, see
\cite{jaffard1991pointwise}, where the author also relates the decay of
wavelet coefficients to local regularity of signals. We extend this approach,
to the non-uniform regime and present similar bounds for samplet coefficients,
providing a method for detecting local regularity directly from scattered data,
without any further structural assumption. To this end, considering the given
signal as a discrete signed measure, we employ a change of basis from the basis
of Dirac-$\delta$-distributions to the samplet basis. This change of basis
can be performed in linear time using the fast samplet transform.
The suggested approach particularly avoids the long training process needed for
neural networks and the difficulties in the construction of wavelets on general
data sets.
We benchmark our method on one-, two- and three-dimensional signals. 
The reported run-times match the theoretical (near) linear rate in terms
of number of data sites.  

The rest of this article is structured as follows. We reserve
Section \ref{SampletsSection} for the preliminaries, i.e.,
the definition of Jaffard's microlocal spaces, the construction of
samplets and the samplet transform.
We relate the decay of samplet coefficients to microlocal spaces,
H\"older classes and Sobolev-Slobodecskij spaces in Section~\ref{sec:LSCD}. 
In Section~\ref{sec:samplets4edge} we develop our algorithm for 
smoothness class detection, and benchmark it in Section~\ref{sec:Numerics}
on concrete examples. Concluding remarks are stated in
Section~\ref{sec:conclusions}.

Throughout this article, to avoid the repeated use of unspecified 
generic constants, we write \(A \lesssim B\) if \(A\) is bounded 
by a uniform constant times \(B\), where the constant does not 
depend on any parameters which \(A\) and \(B\) might depend 
on. Whereas we write \(A \sim B\) if \(A \lesssim B\) and \(B \lesssim A\). 
\section{Preliminaries and notation}\label{SampletsSection}
This section is structured into three parts. We begin by recalling the
idea of microlocal spaces and list some results that will be employed later on.
Afterwards, we give a brief overview on the construction of samplets, while
the last part is concerned with the fast samplet transform.
\subsection{Pointwise smoothness classes} Let \(\Omega\subset\Rbb^d\) denote 
a region.
To measure the local smoothness of a given function, we apply the concept of
microlocal spaces from \cite{jaffard1991pointwise}.
For a given function $f\colon\Omega\to\R$ and a point ${\bs x}_0\in\Omega$, 
we say that $f$
is in the microlocal space \( C^\alpha({\bs x}_0) \), for $\alpha\geq 0$,
if there exists a polynomial \( P \) of degree \( \lfloor\alpha\rfloor \), and
a constant \( R > 0 \), such that for all
\( {\bs x}\in B_R({\bs x}_0)\isdef\{{\bs x}\in\Rbb^d:
\|{\bs x} -{\bs x}_0\|_2\leq R \} \) there holds
\begin{equation}\label{JM1}
  |f({\bs x}) - P({\bs x} - {\bs x}_0)| \lesssim\|{\bs x} - {\bs x}_0\|_2^\alpha.
\end{equation}
Observe that, in view of this definition, $C^0(\bs x_0)$ corresponds to the
space of functions that are bounded in a neighborhood of $\bs x_0$.\\

\noindent 
To provide some context on this notion of smoothness, we recall the definition of 
the usual $(k,\vartheta)$-H\"older spaces. 
\begin{definition}
    Let $\Omega\subseteq\R^d$ be closed, $k\in\N$ and $0<\vartheta\leq 1$. 
    We write $f\in \mathcal{C}^{k,\vartheta}(\Omega)$ if $f$ is $k$ times differentiable in $\Omega$ and there holds
    \begin{equation}
        \big|\partial^{\bs \beta}f(\bs x)-\partial^{\bs \beta}f(\bs y)\big|
        \lesssim\|\bs x-\bs y\|_2^\vartheta,
    \end{equation}
    for every \(|\bs\beta|=k\) and $\bs x$, $\bs y$ in $\Omega$.
\end{definition}

\begin{remark}\label{remGGimp}
There always holds $\mathcal{C}^{k,\vartheta}\big(B_R(\bs x_0)\big)
\subseteq C^{k+\vartheta}(\bs x_0)$
with \(R>0\) from \eqref{JM1}.
However, the reverse inclusion fails, for the regularity of functions in
$\mathcal{C}^{k,\vartheta}\big(B_R(\bs x_0)\big)$ is much stronger than the
pointwise regularity of those in $C^{k+\vartheta}(\bs x_0)$. For instance,
the function $f(x)=x\mathds{1}_{\mathbb{Q}}(x)$, where $\mathds{1}_\mathbb{Q}$
is the indicator function on rationals, is only continuous in $x=0$, and from
\begin{equation}
    |x\mathds{1}_{\mathbb{Q}}(x)-x|
    =|x\mathds{1}_{\R\setminus\mathbb{Q}}(x)|\leq|x|, \qquad x\in\R,
\end{equation}
we infer that it belongs to $C^1(0)$ with arbitrarily large $R$ in \eqref{JM1}.
However, it is trivial that it cannot belong to
$\mathcal{C}^{0,1}\big(B_R(0)\big)$ for any $R>0$. 
\end{remark}

For the benefit of the reader, we report a proof of the inclusion
in Remark \ref{remGGimp}.
  
\begin{proof}
The case $k=0$ is trivial. We may therefore assume $k>0$, and consider
the Taylor expansion of $f$ with integral remainder:
    \begin{align*}
    f(\bs x)&=\sum_{|\bs\beta|\leq k-1}\frac{\partial^{\bs\beta}
    f(\bs x_0)}{\bs \beta!}(\bs x-\bs x_0)^{\bs \beta}\\
    &\phantom{=}
    \qquad+\sum_{|\bs\beta|=k}\frac{(\bs x-\bs x_0)^{\bs \beta}}{\bs\beta!}
    k\int_0^1(1-t)^{k-1}\partial^{\bs\beta}f\big(\bs x_0
    +t(\bs x-\bs x_0)\big)\d t
    \end{align*}
    holding for every $\bs x\in B_R(\bs x_0)$. 
    We make the ansatz
    \[
    P_k(\bs x-\bs x_0)=\sum_{|\bs\beta|\leq k-1}\frac{\partial^{\bs\beta}
    f(\bs x_0)}{\bs \beta!}(\bs x-\bs x_0)^{\bs \beta}
    +\sum_{|\bs\beta|=k}\frac{\partial^{\bs\beta}
    f(\bs x_0)}{\bs\beta!} (\bs x-\bs x_0)^{\bs\beta}.
    \]
Then, there holds
    \[
    f(\bs x)=P_k(\bs x-\bs x_0)+r(\bs x)
    \]
    with
    \[
    r(\bs x)\isdef\sum_{|\bs\beta|=k}\frac{(\bs x-\bs x_0)^{\bs \beta}}
    {\bs\beta!}k
    \int_0^1(1-t)^{k-1}\big[\partial^{\bs\beta}
    f\big(\bs x_0+t(\bs x-\bs x_0)\big)-\partial^{\bs\beta}f(\bs x_0)\big]\d t,
    \]
        due to $\int_0^1(1-t)^{k-1}\d t=1/k$.
    Now letting \(\|\bs x-\bs x_0\|_2\leq R\), there holds
    for \(|\bs\beta|\leq k\) that
    \[
\big|\partial^{\bs\beta}f\big(\bs x_0+t(\bs x-\bs x_0)\big)
    -\partial^{\bs\beta}f(\bs x_0)\big|\lesssim\|\bs x-\bs x_0\|_2^\vartheta
    \]
    and, hence,
    \begin{align*}
    |r(\bs x)|&\leq \sum_{|\bs\beta|= k}|\bs x-\bs x_0|^k\frac{k}{\bs\beta!}
    \int_0^1(1-t)^k\big|\partial^{\bs\beta}f(\bs x_0+t(\bs x-\bs
    x_0))\partial^{\bs\beta}f(\bs x_0)\big|\d t\\
              &\lesssim\frac{d^k}{k!}\|\bs x-\bs x_0\|_2^{k+\vartheta},
    \end{align*}
    for every $\bs x\in B_{R}(\bs x_0)$, where we used the multinomial theorem
    in the last step.
\end{proof}

In view of Remark \ref{remGGimp}, microlocal spaces generalize and localize the
concept of
H\"older spaces. Therefore, we will refer to the supremum of such \( \alpha \), 
for which the condition \eqref{JM1} holds true, 
as the local Hölder exponent of \( f \) at \( {\bs x}_0 \). The latter
quantifies the local smoothness of \( f \) at \( {\bs x}_0 \).

\begin{remark}\label{Cinfty}
The definition of microlocal spaces is compatible with functions that are
infinitely many times differentiable in a point $\bs x_0$ in the following
sense. Let $f\colon\Omega\to\R$ be infinitely many times differentiable in
$\bs x_0\in\Omega$. By Taylor expanding $f$ around $\bs x_0$, we have for
every $k\in\N$ that
    \[
        f(\bs x)=P_k(\bs x-\bs x_0)+\sum_{|\bs \beta|=k}
        r_{\bs \beta}^{(k)}(\bs x)(\bs x-\bs x_0)^{\bs \beta},
    \]
where the functions $r_{\bs \beta}^{(k)}$ satisfy $\lim_{\bs x\to\bs x_0}
r_{\bs \beta}^{(k)}(\bs x)=0$. Fix $k\in\N$, and take $R_{k}>0$ so that
$|r_{\bs \beta}^{(k)}(\bs x)|\leq 1/2$ for every $|\bs x-\bs x_0|\leq R_k$.
It follows that
    \[
        |f(\bs x)-P_k(\bs x-\bs x_0)|\leq C_k |\bs x-\bs x_0|^k
    \]
    for every $|\bs x-\bs x_0|\leq R_k$, i.e., $f\in C^k(\bs x_0)$
    for every $k\in\N$. 
\end{remark}
  
The key result of Jaffard is the following,
see \cite[Theorem 2]{jaffard1991pointwise}. 
Let $\psi\colon\R^d\to\R$ be a mother wavelet with derivatives of order
$N>\alpha$ decaying faster than the inverse of any polynomial, and define
\[
\psi_{j,k}(\bs x) = 2^{jd/2}\psi(2^j \bs x- \bs k), \quad j \in \mathbb{Z},
\quad  k \in \mathbb{Z}^d,\]
If  \( f \in C^\alpha({\bs x}_0) \), then its \( L^2 \)-normalized wavelet
coefficients 
\[
  c_{j,k} = (f, \psi_{j,k})_{L^2}
\]
for wavelets \( \psi_{j,k}\) whose support is localized around
\( {\bs x}_0 \) exhibit a characteristic decay
\begin{equation} \label{eq:jaffard_estimation}
    |c_{j,k}| \lesssim 2^{-j(\alpha + d/2)},
\end{equation}
which yields a characterization of $C^\alpha(\bs x_0)$. 

\subsection{Construction of Samplets}
Samplets are localized discrete signed measures, which exhibit vanishing
moments. We briefly recall their underlying concepts as introduced in
\cite{harbrecht2022samplets}. Let 
\(X \isdef \{{\bs x}_1, \ldots, {\bs x}_N\} \subset \Omega \subset \Rbb^d\) 
denote a set of data sites, and consider the associated
Dirac-$\delta$-distributions \(\delta_{{\bs x}_1}, \ldots,\delta_{{\bs x}_N}
\in \mathcal{C}(\Omega)'\). Here and in what follows, 
\(\mathcal{C}(\Omega)\) denotes the space of continuous functions and
\(\mathcal{C}(\Omega)'\) its topological dual space.
We recall that the Dirac-$\delta$-distributions are defined by the property
\[
(\delta_{{\bs x}_i},f)_\Omega \isdef \delta_{{\bs x}_i}(f) = f({\bs x}_i
)\]
for all \(f \in \mathcal{C}(\Omega)\). For a given function \(f\), we 
consider the data values \(f_i \isdef (f, \delta_{{\bs x}_i})_\Omega\),
\(i = 1, \ldots, N\), which amount to the available information about \(f\).
The span of the Dirac-$\delta$-distributions,
\(\Xcal' \isdef \operatorname{span}\{\delta_{{\bs x}_1}, \ldots,
\delta_{{\bs x}_N}\} \subset \mathcal{C}(\Omega)'\), naturally forms a Hilbert
space with inner product defined as 
\[
\langle u, v \rangle_{\Xcal'} \isdef \sum_{i=1}^N u_i v_i,
\quad \text{with} ~ u = \sum_{i=1}^N u_i \delta_{{\bs x}_i} ~  
\text{and} ~ v = \sum_{i=1}^N v_i \delta_{{\bs x}_i}.
\]
This choice of inner product is motivated by considering the
space \(\Xcal'\) in the context of Banach frames. For the details,
we refer to \cite{BM24}.

For the construction of samplets, we introduce a 
 multiresolution analysis for \(\Xcal'\). A multiresolution
 analysis is a nested sequence of subspaces 
\(\Xcal_0' \subset \Xcal_1' \subset \cdots \subset \Xcal_J' \isdef \Xcal'\).
We assume that each subspace is spanned by a basis 
\({\bs \Phi_j} \isdef \{\varphi_{j,k}\}_k\), i.e., 
\(\Xcal_j' =\spn{\bs \Phi_j}\).
Particularly, each \emph{scaling distribution} \(\varphi_{j,k}\) can
be expressed as a linear combination of Dirac-$\delta$-distributions. 
Due to the nested structure, the space \(\Xcal_{j+1}'\)
can be orthogonally decomposed  into 
\(\Xcal_{j+1}' = \Xcal_j' \oplus \Scal_j'\),
where the detail space
\(\Scal_j'\) is equipped with an orthonormal basis 
\({\bs \Sigma_j} \isdef \{\sigma_{j,k}\}_k\). 
Recursively applying this decomposition yields the \emph{samplet basis}
\({\bs \Sigma_J} = {\bs \Phi_0} \cup \bigcup_{j=0}^{J-1} {\bs \Sigma_j}\),
which serves as a basis for \(\Xcal_J'\).

To enable data feature detection, samplets can be constructed to satisfy 
vanishing moments for a given set of primitives. Specifically, 
we consider here polynomials \(p \in \Pcal_q(\Omega)\) of total degree
less or equal than \(q\) and require that
\begin{equation}\label{vanMoment}
(\sigma_{j,k},p)_\Omega = 0 \quad \text{for all } p \in \Pcal_q(\Omega).
\end{equation}

The multilevel hierarchy underlying the construction of samplets is obtained
by clustering the Dirac-$\delta$-distributions based on the distance of their
supports. This is achieved by a hierarchical cluster tree \(\Tcal\) for
the set of data sites \(X\). 

\begin{definition}
Let  $\mathcal{T} = (V, E)$ be a tree with vertex set $V$ and edge set $E$. 
The set of leaves of $\mathcal{T}$ is defined as
\(
\mathcal{L}(\mathcal{T})\isdef\{ \tau \in V : \tau \text{ has no children} \}.
\)
The tree $\mathcal{T}$ is called a \emph{cluster tree} for the set
$ X $ if the root node corresponds to $X$, and
every non-leaf node $ \tau \in V \setminus \mathcal{L}(\mathcal{T}) $ is the
disjoint union of its children. The \emph{level}  $j_\tau$  of a node 
$\tau \in V$  is the number of edges in the unique path from the root to
$ \tau $ and the \emph{depth} $J$ of $\mathcal{T}$ is the maximum level of 
all nodes. For each $\tau \in V $, the \emph{bounding box} $B_\tau $ is defined
as the smallest axis-aligned dyadic cuboid containing all points associated with
$\tau$. Finally, we set \(\operatorname{diam}(\tau)\isdef\max_{{\bs x}_i,{\bs
x}_j\in\tau}\|{\bs x}_i-{\bs x}_j\|_2\).
\end{definition}

Differently from the original construction in \cite{harbrecht2022samplets}, which
relies on balanced binary trees, we rather consider $ 2^d $-trees
to hierarchically partition the set of data sites. For the reader's convenience,
we recall the definition.

\begin{definition}
A cluster tree $\mathcal{T}$ for $X$ with depth $J$ is called a 
{\em $2^d$-tree}, if
the cluster $\tau$ has exactly $2^d$ children whenever $j_\tau < J$. We further
say that \(\Tcal\) is \emph{balanced} if 
there holds 
\begin{equation}\label{eq:cardinality_bound}
    \#\tau \sim N2^{-j_\tau d}, 
\end{equation}
where $\#\tau $ denotes the cardinality of $\tau$.
\end{definition}

A $2^d$-tree can easily be constructed by geometric clustering, where each
non-leaf node is recursively subdivided into \( 2^d \) child clusters by
dyadic subdivision of the edges of the bounding box of \(X\), 
see Figure \ref{fig:tree} for an illustration. For quasi-uniform sets 
\(X\) in the sense that the fill-distance of \(X\) is similar to the
separation radius of \(X\), this approach even yields a balanced
\(2^d\)-tree. To avoid technicalities, we will in the following
therefore assume that \(X\) is quasi-uniform.

\begin{figure}[htb]
    \begin{subfigure}{0.9\linewidth}
    \includegraphics[width=0.57\linewidth]{./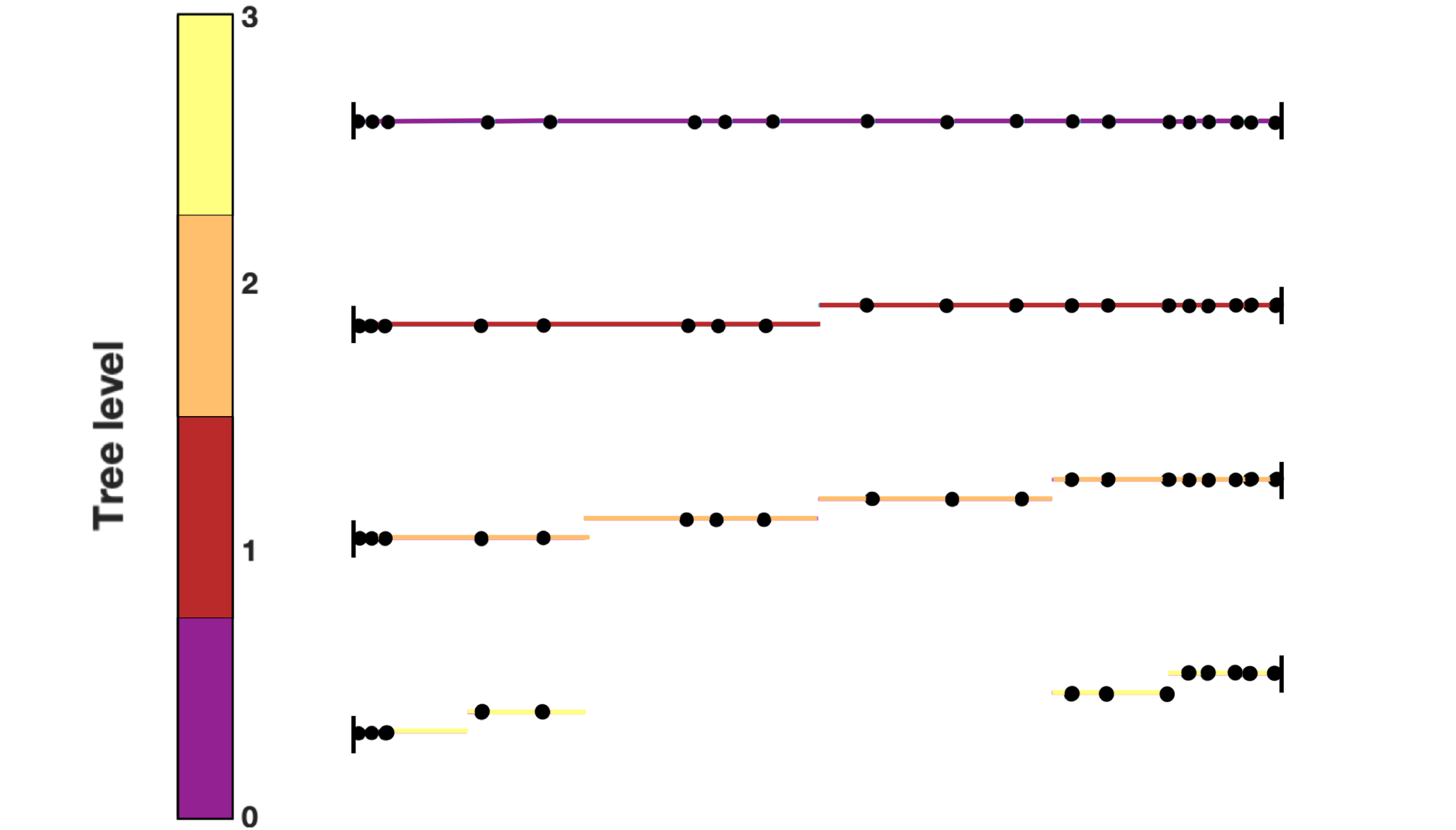} \quad 
    \includegraphics[width=0.39\linewidth]{./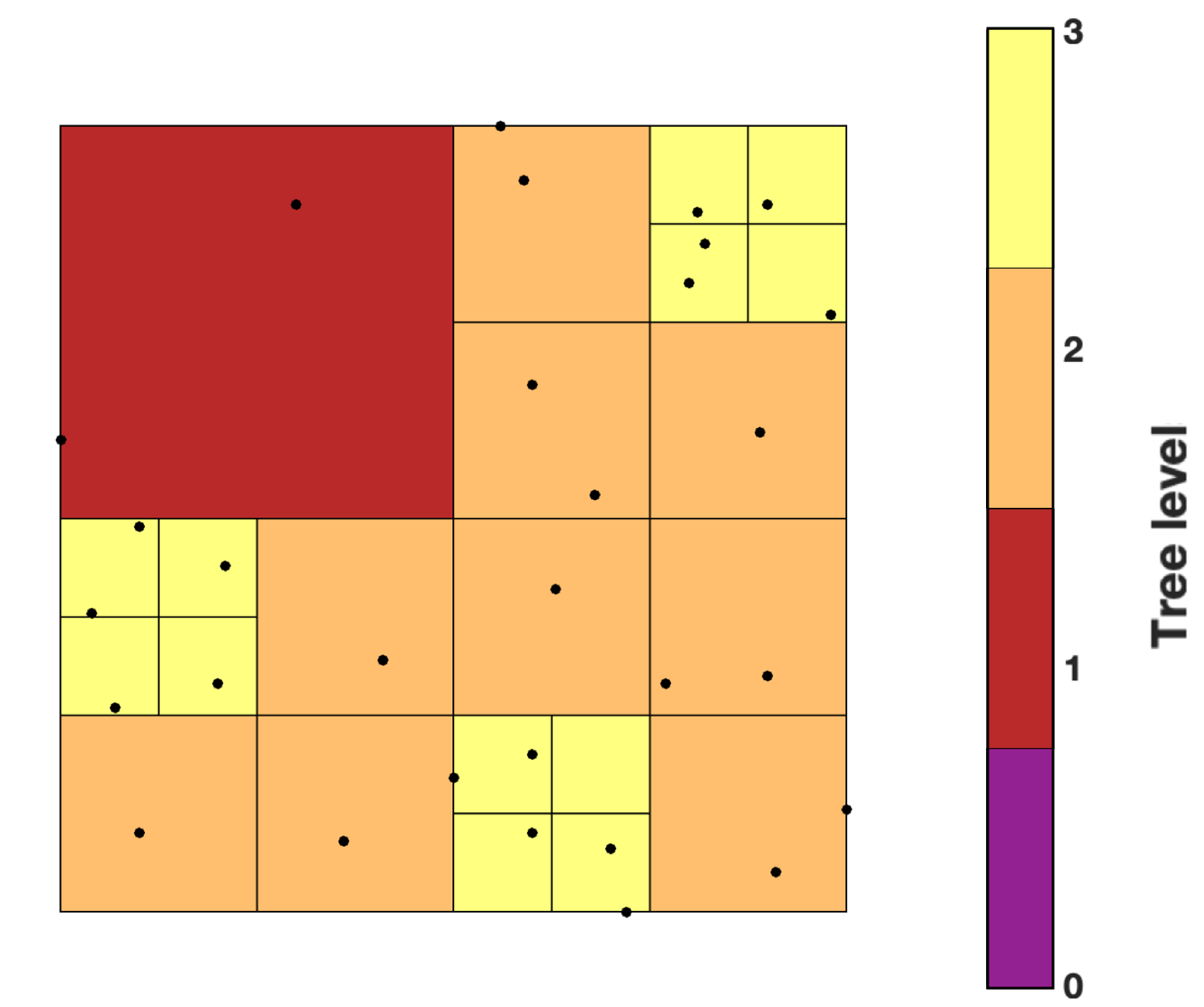}
     \subcaption[]{Binary tree (left) and quadtree (right).}
    \end{subfigure}

    \vspace{0.1 cm}
    
    \begin{subfigure}{0.7\linewidth}
    \includegraphics[width=0.9\linewidth]{./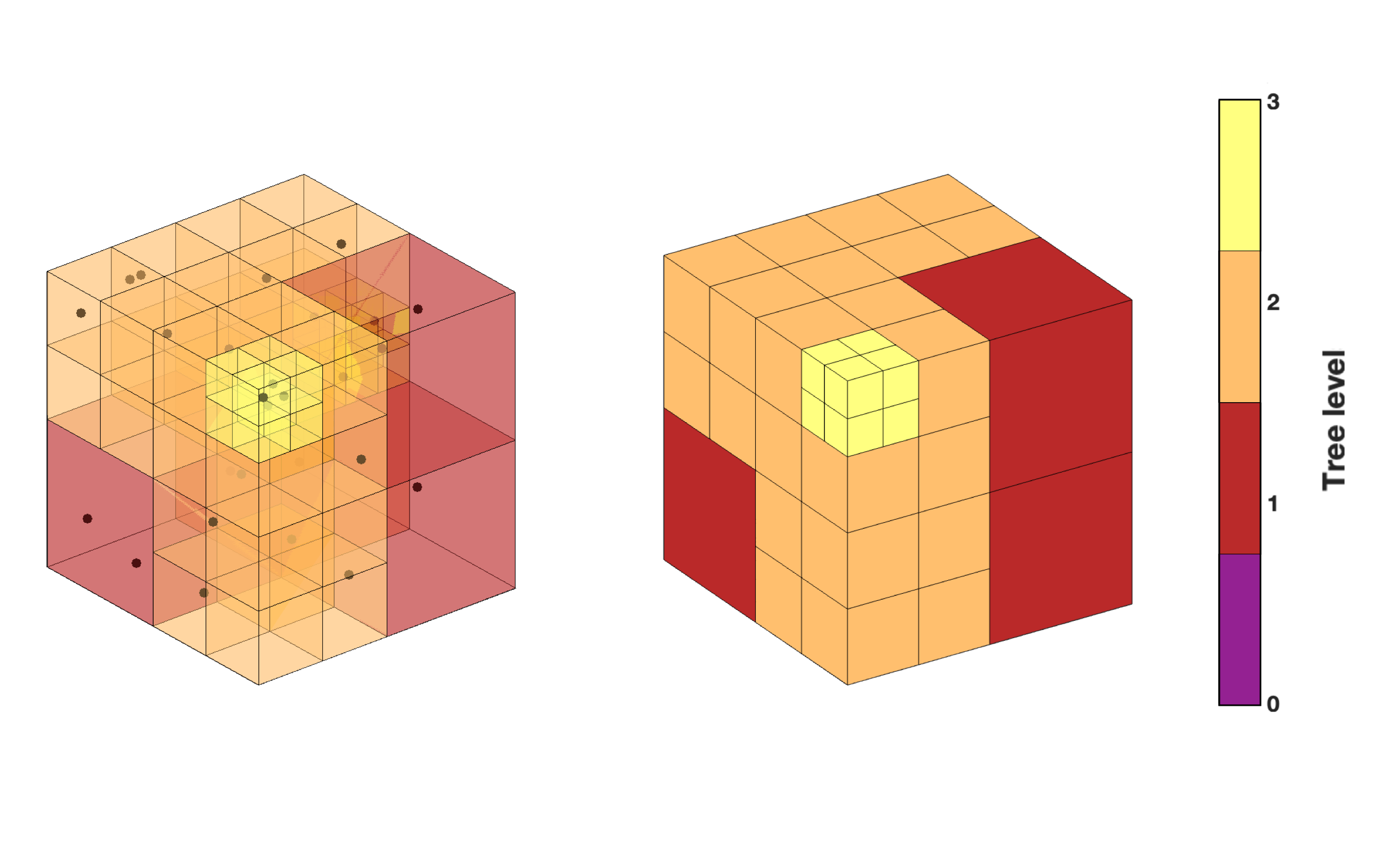}
    \subcaption[]{Octree.}
    \end{subfigure}
\caption{\label{fig:tree}Examples of \(2^d\)-trees in dimensions \(d=1,2,3\).}
\end{figure}

For a balanced \(2^d\)-tree, we obtain \(J\sim d\log_2N\) levels. At each level
\(j\), all \(N\) points are split into \(2^{jd}\) clusters, and the division can
be performed with linear cost. Hence, the overall cost for the construction of
the  \(2^d\)-tree is \(\Ocal(JN)=\Ocal(N\log N)\) in this case. If further
structural details are available on \(X\), e.g., in case that \(X\) refers to
the pixels of an image, and the level \(J\) is known a-priorily, the cost
for the construction of the cluster tree reduces to \(\Ocal(N)\). This is 
facilitated by subdividing the bound box of \(X\) into \(2^{Jd}\) congruent
cuboids and attributing each element of \(X\) to one of these cuboids. 
Afterwards, $2^d$ cuboids are succesively combined to obtain a cluster tree.

Next, we construct a samplet basis based on the hierarchical structure 
of the cluster tree. Scaling distributions 
\({\bs \Phi_j^\tau} = \{\varphi_{j,k}^\tau\}\) and samplets
\({\bs \Sigma_j^\tau} = \{\sigma_{j,k}^\tau\}\) of a given node \(\tau\in\Tcal\)
are represented as linear combinations of the gathered scaling distributions
\({\bs \Phi_{j+1}^\tau}\) of all of $\tau$’s children. 
The refinement relation for the two-scale transform between basis elements is
given by 
\begin{equation}\label{eq:SampletTrafo}
\varphi_{j,k}^{\tau} = \sum_{\ell=1}^{n_{j+1}^\tau}
q_{j,\Phi,\ell,k}^{\tau} \varphi_{j+1,\ell}^{\tau} 
\quad \text{and} \quad 
\sigma_{j,k}^{\tau} = \sum_{\ell=1}^{n_{j+1}^\tau}
q_{j,\Sigma,\ell,k}^{\tau} \varphi_{j+1,\ell}^{\tau},
\end{equation}
where \(n_{j+1}^\tau \isdef \#{\bs \Phi}_{j+1}^\tau\). In matrix form,
this is compactly expressed as 
\[
[{\bs \Phi}_j^\tau, {\bs \Sigma}_j^\tau ] \isdef {\bs \Phi}_{j+1}^\tau
{\bs Q}_j^\tau = {\bs \Phi}_{j+1}^\tau
\big[ {\bs Q}_{j,\Phi}^\tau, {\bs Q}_{j,\Sigma}^\tau \big].
\]
To obtain samplets enjoying vanishing moments,
the filter coefficients \({\bs Q}_j^\tau\) are computed by 
the QR decomposition of the transpose of the moment matrix
\[{\bs M}^\tau_{j+1}\isdef[({\bs x}^{\bs\alpha},
{\bs \Phi_{j+1}^\tau})_\Omega]_{|\bs\alpha|\leq q}.
\]
In particular, this guarantees the orthonormality of the resulting basis.
For all the details, we refer to \cite[Section 3.2]{harbrecht2022samplets}.

In the following, we assume a global, levelwise indexing of the samplets
and simply write \(\sigma_{j,k}\) for a samplet on level \(j\). Furthermore,
we will not distinguish between samplets and scaling distributions at level
\(0\) and refer to either of them as \(\sigma_{0,k}\).

\begin{remark}\label{remNorm1}
Every samplet $\sigma_{j,k}\in\Xcal_j'$, \(j=0,\ldots,J\), is a linear
combination of the Dirac-\(\delta\)-distributions 
$\delta_{{\bs x}_1},\ldots, \delta_{{\bs x}_{N}}$ and can thus be 
written as
    \[
\sigma_{j,k}=\sum_{\ell=1}^{N}\omega_{j,k}^{(\ell)}\delta_{{\bs x}_\ell}.
    \]
We denote the coefficient vector of $\sigma_{j,k}$ by 
$\bs \omega_{j,k} = \big[\omega_{j,k}^{(\ell)}\big]_{\ell = 1}^{N}$ and
notice that \(\omega_{j,k}^{(\ell)}=0\) whenever 
\({\bs x}_\ell\not\in\operatorname{supp}\sigma_{j,k}\).

Moreover, there holds $\|\bs \omega_{j,k} \|_2 = 1 $ and,
using Cauchy-Schwarz inequality,
    \begin{equation}\label{eq:norm1_samplet_coeffs}
        \|\bs \omega_{j,k} \|_1 \leq \sqrt{\#\tau} \|\bs \omega_{j,k} \|_2
        = \sqrt{\#\tau}.
    \end{equation}
    see also \cite[Remark 3.8]{harbrecht2022samplets}. 
\end{remark}

\begin{remark}\label{remDistribWav}
Samplets can be interpreted as distributional wavelets.
Consider the case where $\Omega=[0,1]^d$, then the samplet
$\sigma_{j,k}$ with support contained in 
\(Q_{{\bs p},j}\isdef {\bs p}+2^{-j}\Omega\), where
\(p_m\in 2^{-j}\{0,\ldots,2^{j}-1\}\) for \(m=1,\ldots,d\),
 acts on $f\in\mathcal{C}(\Omega)$ by
    \[
    ( \sigma_{j,k},f)_\Omega=\sum_{\ell=1}^{N}\
    \omega_{j,k}^{(\ell)}f(\bs x_\ell).
    \]
    By writing $\bs x_\ell=\bs p +2^{-j}\bs y_\ell$, \({\bs y}_\ell\in\Omega\),
    we find
    \begin{equation}\label{eqGGf1}
    (\sigma_{j,k},f)_\Omega=\sum_{\ell=1}^{N}\omega_{j,k}^{(\ell)}
    f(2^{-j}\bs y_\ell+\bs p)=2^{jd/2}\sum_{\ell=1}^{N}\omega_{j,k}^{(\ell)}
    (T_{-\bs p}D_{-j}f)(\bs y_\ell),
    \end{equation}
    where $T_{\bs p}f=f(\cdot-\bs p)$, and $D_{j}f=2^{jd/2}f(2^{j}\cdot)$,
    $j\in\mathbb{Z}$, are the translation and the $L^2$-normalized dilation
    operator, respectively. By definition of translations and dilations of
    distributions, we may rephrase \eqref{eqGGf1} as
    \begin{equation}\label{DistributionalWavelets}
    (\sigma_{j,k},f)_\Omega=2^{jd/2}\bigg(D_jT_{\bs p}
    \sum_{\ell=1}^{N}\omega_{j,k}^{(\ell)}\delta_{\bs y_\ell},f\bigg)_{\Omega},
    \end{equation}
    which stresses that each samplet can be considered a distributional mother wavelet. 
\end{remark}

\subsection{Fast samplet transform}
To change between the basis of Dirac‐\(\delta\)-distributions and the samplet
basis, we employ the fast samplet transform or its inverse, respectively. Given
the data points
\(
\{(\bs {x}_i, y_i)\}_{i=1}^N
\), we set
\[
y_i =f_i^\Delta\isdef(\delta_{\bs{x}_i},f)_\Omega,
\]
for some possibly unknown function \(f\colon\Omega\to\mathbb{R}\).
We identify \(f\) by the functional
\[
f =\sum_{i=1}^N f_i^\Delta\,\delta_{\bs{x}_i},
\]
which takes the form
\[
f =\sum_{i=1}^N f_i^\Sigma\,\sigma_i
\]
in the samplet basis. We refer to \cite{BM24} for rigorous derivation and a
functional analytic motivation of this functional. 

The fast samplet transform maps the coordinate vector
\(\bs{f}^\Delta = [f_i^\Delta]_{i=1}^N\)
in the basis of Dirac-$\delta$-distributions to the coordinate vector
\(\bs{f}^\Sigma = [f_i^\Sigma]_{i=1}^N\)
in the samplet basis, according to the relation 
$\bs{f}^\Sigma = \bs T \bs{f}^\Delta$, where $\bs T \in \mathbb{R}^{N\times N}$
is the orthogonal matrix containing the filter coefficients 
from \eqref{eq:SampletTrafo}.  We remark that the matrix \(T\) is not assembled
in practice and that the fast samplet transform can be performed recursively
with cost \(\Ocal(N)\). The same holds true for the inverse transform.
For the implementation details, see \cite[Section 4]{harbrecht2022samplets}.

\section{Local smoothness detection}\label{sec:LSCD}
In this section, we present the theoretical foundations for detecting local
regularity using samplets. Building on the pointwise Hölder exponent framework
introduced for wavelets in \cite{jaffard1996wavelet}, we establish analogous
decay estimates for samplet coefficients. The motivation for adopting a
pointwise approach lies in the structure of the samplet tree. Specifically, for
a fixed $\bs x_0\in \Omega$ consider the unique leaf of the cluster tree that
contains $\bs x_0$. Consider then the branch joining that leaf to the
corresponding clusters at each level $j$. As the level $j$ increases, the number
of data sites within each cluster decreases. Thus, descending along a branch
corresponds to analyzing the function $f$ in a successively decreasing
neighborhood around the point $\bs x_0$.
Our main result is for $C^\alpha(\bs x_0)$ classes.
\begin{theorem} \label{thm:final_local_decay}
    Let ${\bs x}_0\in\Omega$ and $f\in C^\alpha({\bs x}_0)$, $\alpha\geq 0$. 
    Assume that \eqref{vanMoment} holds for $q\geq\lfloor\alpha\rfloor$. 
    Then, for every cluster $\tau$ that contains $x_0$, we have
    \begin{equation}\label{eqgg1}
        |( \sigma_{j,k},f)|\lesssim \diam(\tau)^\alpha \sqrt{\#\tau}.
    \end{equation}
    In particular, for $\Omega=[0,1]^d$ and {balanced $2^d$-trees}, there holds 
    \begin{equation}\label{eqgg2}
        |( \sigma_{j,k},f)_\Omega|\lesssim \sqrt{Nd^{\alpha}}2^{-j(\alpha+d/2)},
    \end{equation}
    where $N$ is the number of data sites.
\end{theorem}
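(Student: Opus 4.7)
The plan is to combine the vanishing-moment property \eqref{vanMoment} of samplets with the local polynomial approximation defining $C^\alpha(\bs x_0)$. Writing $\sigma_{j,k}=\sum_\ell \omega_{j,k}^{(\ell)}\delta_{\bs x_\ell}$ as in Remark \ref{remNorm1} and letting $P$ be the polynomial of degree $\lfloor\alpha\rfloor$ from \eqref{JM1}, the assumption $q\geq\lfloor\alpha\rfloor$ implies $P(\cdot-\bs x_0)\in\mathcal{P}_q(\Omega)$ and hence $(\sigma_{j,k},P(\cdot-\bs x_0))_\Omega=0$. Subtracting this vanishing quantity from $(\sigma_{j,k},f)_\Omega$ gives
\[
(\sigma_{j,k},f)_\Omega=\sum_{\bs x_\ell\in\tau}\omega_{j,k}^{(\ell)}\bigl[f(\bs x_\ell)-P(\bs x_\ell-\bs x_0)\bigr],
\]
where I use that $\supp\sigma_{j,k}\subseteq\tau$ to restrict the summation.

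Next, since $\bs x_\ell,\bs x_0\in\tau$, there holds $\|\bs x_\ell-\bs x_0\|_2\leq\diam(\tau)$. In the generic case $\diam(\tau)\leq R$, the pointwise bound \eqref{JM1} immediately yields $|f(\bs x_\ell)-P(\bs x_\ell-\bs x_0)|\lesssim\diam(\tau)^\alpha$. Then, passing the sum through the triangle inequality, applying the $\ell_1$-bound \eqref{eq:norm1_samplet_coeffs}, and using $\|\bs\omega_{j,k}\|_1\leq\sqrt{\#\tau}$, I obtain
\[
|(\sigma_{j,k},f)_\Omega|\lesssim\diam(\tau)^\alpha\|\bs\omega_{j,k}\|_1\leq\diam(\tau)^\alpha\sqrt{\#\tau},
\]
which is \eqref{eqgg1}. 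The refined estimate \eqref{eqgg2} then follows by specialising to balanced $2^d$-trees on $[0,1]^d$: the bounding box of $\tau$ at level $j$ has side $2^{-j}$, so $\diam(\tau)\leq\sqrt{d}\,2^{-j}$ and therefore $\diam(\tau)^\alpha\leq d^{\alpha/2}2^{-j\alpha}$; combining this with $\#\tau\sim N2^{-jd}$ from \eqref{eq:cardinality_bound} assembles the factors into $\sqrt{Nd^\alpha}\,2^{-j(\alpha+d/2)}$.

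The only delicate point I anticipate is handling coarse clusters with $\diam(\tau)>R$, for which \eqref{JM1} does not directly apply. On such clusters, however, $\diam(\tau)^\alpha\geq R^\alpha$ is bounded from below by a positive constant depending only on $\bs x_0$ and $\alpha$, so the claim reduces to the trivial estimate $|(\sigma_{j,k},f)_\Omega|\leq \|f\|_{L^\infty(\tau)}\sqrt{\#\tau}$, which can be absorbed into the implicit constant under the mild assumption that $f$ is bounded on $\Omega$ (local boundedness on $B_R(\bs x_0)$ is already built into \eqref{JM1}). I expect this coarse-level bookkeeping to be the main administrative obstacle; the heart of the argument is the clean interplay of vanishing moments, pointwise Taylor residuals, and the $\ell_1$-bound \eqref{eq:norm1_samplet_coeffs} on the samplet coefficient vector.
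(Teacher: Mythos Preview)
Your proposal is correct and follows essentially the same approach as the paper: subtract the polynomial $P(\cdot-\bs x_0)$ using the vanishing moments \eqref{vanMoment}, apply the pointwise bound \eqref{JM1}, and then use the $\ell_1$-estimate $\|\bs\omega_{j,k}\|_1\leq\sqrt{\#\tau}$ from Remark~\ref{remNorm1} (the paper unwinds this as an explicit Cauchy--Schwarz step, but it is the same inequality). You are in fact slightly more careful than the paper, which silently restricts to clusters with $\diam(\tau)\leq R$; your handling of the coarse-level case via a uniform bound is a reasonable way to absorb it into the implied constant.
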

\begin{proof}
    Let ${\bs x}_0\in\Omega$ and $\tau$ be any cluster containing ${\bs x}_0$
    with \(\operatorname{diam}(\tau)\leq R\), where \(R\) is the constant in the
    definition of the microlocal space \(C^\alpha({\bs x}_0)\).
    We can write the samplet $\sigma_{j,k}$ as a linear combination of
    Dirac-$\delta$-distributions, 
    say $\sigma_{j,k}
    =\sum_{\ell=1}^{\#\tau}\omega_{j,k}^{(\ell)}\delta_{{\bs x}_\ell}$, 
    where $\{{\bs x}_1,\ldots,{\bs x}_{\#\tau}\}$ are the data sites
    contained in $\tau$.
    
    Let $P$ be a polynomial satisfying \eqref{JM1}.
    By the vanishing moments property \eqref{vanMoment}, there holds 
    \[
        ( \sigma_{j,k},f)_\Omega
        =\big( \sigma_{j,k},f-P(\cdot-{\bs x}_0)\big)_\Omega
        =\sum_{\ell=1}^{\#\tau}\omega_{j,k}^{(\ell)}\big(f({\bs x}_\ell)
          -P({\bs x}_\ell-{\bs x}_0)\big),
    \]
    such that \eqref{JM1} yields    \begin{align*}
        |( \sigma_{j,k},f)_\Omega| &
        \leq  \sum_{\ell=1}^{\#\tau}|\omega_{j,k}^{(\ell)}|
        |f({\bs x}_\ell)-P({\bs x}_\ell-{\bs x}_0)|
        \lesssim \sum_{\ell=1}^{\#\tau}|\omega_{j,k}^{(\ell)}||{\bs x}_\ell
        -{\bs x}_0|^\alpha\\
        &\leq \sqrt{\#\tau}\left(\sum_{\ell=1}^{\#\tau}
        |\omega_{j,k}^{(\ell)}|^2\right)^{1/2}\diam(\tau)^\alpha
        \leq \diam(\tau)^\alpha \sqrt{\#\tau},
    \end{align*}
    where we also used Remark \ref{remNorm1}. This proves \eqref{eqgg1}.
    In the framework
    of balanced $2^d$-trees, $\#\tau\sim 2^{-jd}N$ and $\diam(\tau)
    = \sqrt{d}2^{-j}$.
    Inserting these information into \eqref{eqgg1} proves \eqref{eqgg2}.
    
\end{proof}

\begin{remark} 
Let us comment on the decay of coefficients of a function $f$ that is infinitely
many times differentiable in a point $\bs x_0$, in the setting of balanced
$2^d$-trees. By Remark \ref{Cinfty}, there holds $f\in C^m(\bs x_0)$ for every
$m\in\N$. For every fixed $q\geq1$ such that \eqref{vanMoment} holds, we have
    \[
      |(\sigma_{j,k},f)_\Omega|\leq \sqrt{Nd^{q+1}}2^{-j(q+1+d/2)},
    \]
    following by Remark \ref{Cinfty} and Theorem \ref{thm:final_local_decay}.
\end{remark}

The microlocal space $C^\alpha(\bs x_0)$ contains the usual H\"older space of 
$\lfloor\alpha\rfloor$-times differentiable functions with
$(\alpha-\lfloor\alpha\rfloor)$-H\"older
continuous derivatives of order $\lfloor\alpha\rfloor$, as explained in Remark \ref{remGGimp}.
Hence, we also obtain the decay of samplet coefficients of locally H\"older continuous
functions as a consequence of Theorem~\ref{thm:final_local_decay}.

\begin{corollary}
    Let $0<\vartheta\leq 1$, $m\in \N$ and $\bs x_0\in\Omega$ and consider
    $f\in \mathcal{C}^{m,\vartheta}\big(B_R(\bs x_0)\big)$.
    Then, assuming that \eqref{vanMoment} holds for $q\geq m$, we have that
\begin{equation}\label{eqgg3}
        |( \sigma_{j,k},f)|\lesssim \diam(\tau)^{\vartheta+m} \sqrt{\#\tau}.
    \end{equation}
     In particular, in the framework of {balanced $2^d$-trees},
     where $\Omega=[0,1]^d$, 
    \[
        |( \sigma_{j,k},f)_\Omega|\lesssim \sqrt{Nd^{\vartheta+m}}
        2^{-j(\vartheta+m+d/2)},
    \]
    where $N$ is the number of data sites.
\end{corollary}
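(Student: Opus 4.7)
The plan is to reduce the corollary to an immediate application of \Cref{thm:final_local_decay}, invoking the inclusion $\mathcal{C}^{m,\vartheta}\big(B_R(\bs x_0)\big)\subseteq C^{m+\vartheta}(\bs x_0)$ recorded in \Cref{remGGimp}. Thus, for $f\in\mathcal{C}^{m,\vartheta}\big(B_R(\bs x_0)\big)$, setting $\alpha\isdef m+\vartheta$, condition \eqref{JM1} holds on $B_R(\bs x_0)$.

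The one small point that requires attention is the vanishing-moment hypothesis. \Cref{thm:final_local_decay} is stated with the requirement $q\geq\lfloor\alpha\rfloor$, which for $\alpha=m+\vartheta$ with $\vartheta=1$ would naively read $q\geq m+1$. However, the approximating polynomial $P_k$ produced in the proof of \Cref{remGGimp} is of degree exactly $k=m$, so inspecting the argument of \Cref{thm:final_local_decay} shows that only $q\geq m$ is actually needed to annihilate $P$ via the vanishing-moment relation \eqref{vanMoment}. With this observation, the hypothesis $q\geq m$ in the corollary is sufficient, and the proof of \Cref{thm:final_local_decay} applies verbatim to yield
\[
    |(\sigma_{j,k},f)_\Omega|\lesssim \diam(\tau)^{m+\vartheta}\sqrt{\#\tau}
\]
for every cluster $\tau$ containing $\bs x_0$ with $\diam(\tau)\leq R$, which is \eqref{eqgg3}.

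For the second assertion, we specialize to $\Omega=[0,1]^d$ and a balanced $2^d$-tree, so that the balance condition \eqref{eq:cardinality_bound} gives $\#\tau\sim N 2^{-jd}$, while the dyadic structure of the bounding boxes yields $\diam(\tau)=\sqrt d\,2^{-j}$. Substituting these into \eqref{eqgg3} produces
\[
    |(\sigma_{j,k},f)_\Omega|\lesssim \bigl(\sqrt d\,2^{-j}\bigr)^{m+\vartheta}\sqrt{N\,2^{-jd}}
    =\sqrt{Nd^{m+\vartheta}}\,2^{-j(m+\vartheta+d/2)},
\]
which is the claimed bound. The only genuine \emph{obstacle} is the verification of the polynomial-degree accounting above; once that is settled, the corollary is a direct consequence of \Cref{thm:final_local_decay} and \Cref{remGGimp}.
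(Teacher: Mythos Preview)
Your proposal is correct and follows precisely the paper's approach: invoke the inclusion of \Cref{remGGimp} and then apply \Cref{thm:final_local_decay} with $\alpha=m+\vartheta$. Your additional remark on the polynomial-degree accounting in the boundary case $\vartheta=1$ (where $\lfloor m+\vartheta\rfloor=m+1$ but the Taylor polynomial from \Cref{remGGimp} has degree only $m$) is in fact more careful than the paper's own one-line proof, which glosses over this point.
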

\begin{proof}
By Remark \ref{remGGimp}, $f\in C^{k+\vartheta}(\bs x_0)$, and \eqref{eqgg3}
follows by applying \eqref{eqgg1} with $\alpha=k+\vartheta$.    
\end{proof}

We can also infer the decay of the samplet coefficients for functions in
Sobolev-Slo\-bo\-dec\-kij spaces \cite{demengel2012functional}, whose definition is
reported below.
For the purpose of this work, we may consider $\Omega'\subset\R^d$ to be a
closed ball of conveniently small radius. Let $1\leq p<\infty$, $0<\rho<1$
and $f\in L^p(\Omega')$.
The {\em Slobodeckij-} or {\em Gagliardo-seminorm} of $f$ is
\[
[f]_{\rho,p,\Omega'}=\bigg(\int_{\Omega'}\int_{\Omega'}
\frac{|f(\bs x)-f(\bs y)|^p}{|\bs x-\bs y|^{\rho p+d}}\d{\bs x}
\d{\bs y}\bigg)^{1/p}.
\]
Let $s>0$. We say that $f$ is in the {\em Sobolev-Slobodeckij space}
$W^{s,p}(\Omega')$ 
if $f$ is in the Sobolev space $W^{\lfloor s\rfloor,p}(\Omega')$, and
\[
\max_{|\bs\beta|=\lfloor s\rfloor}
\big[\partial^{\bs\beta} f\big]_{s-\lfloor s\rfloor,p,\Omega'}<\infty.
\]
Then, 
\begin{equation}\label{SSkiSemi}
\|f\|_{W^{s,p}(\Omega')}=\|f\|_{ W^{\lfloor s\rfloor,p}(\Omega')}
+\max_{|\bs\beta|
=\lfloor s\rfloor}\big[\partial^{\bs\beta} f
\big]_{s-\lfloor s\rfloor,p,\Omega'}
\end{equation}
defines a Banach space norm on $W^{s,p}(\Omega')$. In particular, we see by
\eqref{SSkiSemi}
that $W^{s,p}(\Omega')\subseteq W^{\lfloor s\rfloor,p}(\Omega')$. Moreover,
Sobolev-Slobodeckij spaces coincide with the usual Sobolev spaces when $s$ is
an integer,
i.e., $W^{s,p}(\Omega')=W^{m,p}(\Omega')$ if $s=m\in\mathbb{N}$. 

By the Sobolev embedding theorem \cite[Theorem 4.58]{demengel2012functional} and
Remark \ref{remGGimp}, we retrieve the
following embedding of Sobolev-Slobodeckij spaces and microlocal spaces.
 \begin{lemma}
     Assuming $s>d/p$, the following statements hold.
     \begin{enumerate}[(i)]
        \item If $s-d/p\notin\mathbb{N}$, then $W^{s,p}(\Omega')
          \subseteq \mathcal{C}^{\lfloor s-d/p\rfloor,\vartheta}(\Omega')$
          for every $0<\vartheta<s-d/p-\lfloor s-d/p\rfloor$.
        \item  If $s-d/p\in\mathbb{N}$, then $W^{s,p}(\Omega')
          \subseteq \mathcal{C}^{ s-d/p-1,\vartheta}(\Omega')$
          for every $0<\vartheta<1$.
     \end{enumerate}
     In particular, for every $\varepsilon>0$ sufficiently small, there holds
        \[
        W^{s,p}(\Omega')\subseteq C^{s-\frac{d}{p}-\varepsilon}(\bs x_0).
        \]
\end{lemma}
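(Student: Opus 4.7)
The plan is to treat parts (i) and (ii) as direct consequences of a classical Sobolev embedding, and then derive the pointwise ``In particular'' conclusion by passing from Hölder regularity on a ball to microlocal regularity at the single point $\bs x_0$ using Remark \ref{remGGimp}.

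For (i) and (ii), I would simply invoke the Sobolev embedding theorem \cite[Theorem 4.58]{demengel2012functional} on the bounded Lipschitz domain $\Omega'$, splitting according to whether the fractional index $s-d/p$ lands between integers or exactly on one. Since $\Omega'$ is a closed ball this is a textbook statement requiring no further argument; the only bookkeeping is to match the exponent conventions used in \cite{demengel2012functional} with our $\mathcal{C}^{k,\vartheta}$ notation.

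For the final inclusion into $C^{s-d/p-\varepsilon}(\bs x_0)$, I would split into the same two cases. In case (i), set $\alpha\isdef s-d/p$ and $r\isdef\alpha-\lfloor\alpha\rfloor\in(0,1)$; given $\varepsilon>0$ small, choose $\vartheta\isdef r-\varepsilon\in(0,r)$, so that (i) gives $W^{s,p}(\Omega')\subseteq\mathcal{C}^{\lfloor\alpha\rfloor,\vartheta}(\Omega')$, and Remark \ref{remGGimp} applied on a ball $B_R(\bs x_0)\subseteq\Omega'$ yields the embedding into $C^{\lfloor\alpha\rfloor+\vartheta}(\bs x_0)=C^{\alpha-\varepsilon}(\bs x_0)$. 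In case (ii), set $m\isdef s-d/p\in\mathbb{N}$ and pick $\vartheta\isdef 1-\varepsilon$; then (ii) combined with Remark \ref{remGGimp} gives $W^{s,p}(\Omega')\subseteq C^{m-1+\vartheta}(\bs x_0)=C^{s-d/p-\varepsilon}(\bs x_0)$.

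There is no real obstacle here since both ingredients are standard; the one subtlety is purely bookkeeping, namely choosing $\vartheta$ strictly below the allowed threshold in each case so that the exponent $\lfloor s-d/p\rfloor+\vartheta$, respectively $s-d/p-1+\vartheta$, matches $s-d/p-\varepsilon$ for arbitrarily small $\varepsilon>0$. One should also remark, for completeness, that higher microlocal regularity implies lower (so that the unified statement $W^{s,p}\subseteq C^{s-d/p-\varepsilon}$ is meaningful in both cases), which is immediate from the definition \eqref{JM1} by truncating the Taylor polynomial at degree $\lfloor s-d/p-\varepsilon\rfloor$ and absorbing the discarded terms into the residual bound.
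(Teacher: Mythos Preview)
Your proposal is correct and follows essentially the same approach as the paper: items (i) and (ii) are cited directly from \cite[Theorem 4.58]{demengel2012functional}, and the final inclusion is obtained by combining these with the embedding $\mathcal{C}^{k,\vartheta}(\Omega')\subseteq C^{k+\vartheta}(\bs x_0)$ of Remark~\ref{remGGimp}. Your version is slightly more explicit about the bookkeeping in choosing $\vartheta$ in each case, but the argument is the same.
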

\begin{proof}
Items $(i)$ and $(ii)$ are precisely the content of \cite[Theorem
    4.58]{demengel2012functional}. If $\bs x_0\in\Omega'$ and $\Omega'$ is a
    closed ball
    containing $\bs x_0$ with conveniently small radius, the inclusion
    $\mathcal{C}^{k,\vartheta}(\Omega')\subseteq C^{k+\vartheta}(\bs x_0)$
    holds and the las claim follows by items $(i)$ and $(ii)$.
 \end{proof}
 
Finally, the following result follows by applying Theorem
\ref{thm:final_local_decay} to a function $f$ under the assumptions
of this section. 
\begin{corollary}
Let $\bs x_0\in\Omega\subseteq\Omega$ and let $\Omega'$ be a closed ball
centered in $\bs x_0$
    with conveniently small radius.
    Let $f\in W^{s,p}(\Omega')$, with $1\leq p<\infty$ and $s>d/p$.
    If \eqref{vanMoment}
    holds for $q\geq \lfloor s-d/p\rfloor$, we have that
    \[
        |( \sigma_{j,k},f)|\lesssim \diam(\tau)^{s-d/p-\varepsilon}
        \sqrt{\#\tau}
    \]
    for every $\varepsilon< s-d/p-\lfloor s-d/p\rfloor$. 
    In particular, in the framework of {balanced $2^d$-trees},
    if $p=2$, we have 
    that for every $\varepsilon< s-d/2-\lfloor s-d/2\rfloor$ there holds
    \[
        |( \sigma_{j,k},f)_\Omega|\lesssim \sqrt{Nd^{s-d/2-\varepsilon}}
        2^{-j(s-\varepsilon)},
    \]
    where $N$ is the number of data sites.
\end{corollary}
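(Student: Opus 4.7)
The plan is to reduce the statement to Theorem~\ref{thm:final_local_decay} by invoking the Sobolev embedding lemma immediately preceding the corollary, and then to specialize the generic bound \eqref{eqgg1} to the geometric parameters of a balanced $2^d$-tree. First I would set $\alpha \isdef s - d/p - \varepsilon$ and appeal to the lemma to get $f \in C^{\alpha}(\bs x_0)$. The natural range $\varepsilon < s - d/p - \lfloor s - d/p\rfloor$ in the statement corresponds to case (i) of the lemma (when $s - d/p$ is not an integer); if $s-d/p\in\N$ the stated range is empty, so there is nothing to prove.

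Next I would verify that the vanishing-moment hypothesis of Theorem~\ref{thm:final_local_decay} carries over. The assumption $q\geq\lfloor s-d/p\rfloor$ together with $\varepsilon<s-d/p-\lfloor s-d/p\rfloor$ yields $\lfloor\alpha\rfloor=\lfloor s-d/p\rfloor\leq q$. Applying \eqref{eqgg1} to any cluster $\tau$ containing $\bs x_0$ then gives the first claim
\[
|(\sigma_{j,k},f)_\Omega| \lesssim \diam(\tau)^{s-d/p-\varepsilon}\sqrt{\#\tau}.
\]
For the balanced $2^d$-tree specialization with $p=2$, I would insert $\#\tau \sim 2^{-jd}N$ and $\diam(\tau)=\sqrt{d}\,2^{-j}$, as in the proof of Theorem~\ref{thm:final_local_decay}. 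The exponent bookkeeping is
\[
(\sqrt{d}\,2^{-j})^{s-d/2-\varepsilon}\cdot 2^{-jd/2}\sqrt{N}
=\sqrt{Nd^{s-d/2-\varepsilon}}\,2^{-j(s-d/2-\varepsilon)-jd/2}
=\sqrt{Nd^{s-d/2-\varepsilon}}\,2^{-j(s-\varepsilon)},
\]
which matches the target rate.

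The main obstacle is essentially bureaucratic rather than conceptual: handling the two regimes of the Sobolev embedding and ensuring that the choice of $\varepsilon$ is compatible with both the embedding statement (which requires $0<\vartheta<s-d/p-\lfloor s-d/p\rfloor$) and the vanishing-moment requirement. One small point of care is that \eqref{eqgg1} is stated for clusters with $\operatorname{diam}(\tau)\leq R$, where $R$ is the radius provided by the definition of $C^\alpha(\bs x_0)$; for balanced $2^d$-trees this holds as soon as $j$ is large enough, and the finitely many coarser levels can be absorbed in the implicit constant. Beyond this, the argument is a direct substitution.
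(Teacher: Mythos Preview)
Your proposal is correct and follows exactly the route indicated in the paper: the paper does not write out a proof but simply states that the corollary ``follows by applying Theorem~\ref{thm:final_local_decay}'' after invoking the Sobolev embedding lemma, which is precisely what you do. Your additional remarks on the integer case, the vanishing-moment compatibility, and the finitely many coarse levels are sound and make the argument more complete than the paper's one-line indication.
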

\section{Samplets for local smoothness detection}\label{sec:samplets4edge}

In this section, we apply the developed framework to analyze the local
smoothness properties of a given function $f$ using samplets. For the set of
data sites \( X = \{ \bs x_1, \ldots, \bs x_N \} \), we first partition $X$
through a $2^d$-tree. Then, we apply the fast samplet transform
$\bs f^{\Sigma} = \bs T \bs f^{\Delta}$ to the vector
$\bs f^{\Delta} = [f(\bs x_1), \ldots, f(\bs x_N)]$, as explained in
Section~\ref{SampletsSection}. {Finally, we fit the decay rates of the samplet
coefficients along each branch of the tree.} To this end, we traverse the
$2^d$-tree using a depth-first search (DFS), see \cite{tarjan1972depth}. For a fixed branch \({\bs r}=[\tau_0,\ldots,\tau_J]\) of the tree, 
let
\(
\bs{e}_{\bs r} = [e_0^{\bs r}, \ldots, e_J^{\bs r}]
\)
denote the vector of the cluster-wise Euclidean norms of the samplet
coefficients along this branch. Precisely, 
{\[e_j^{\bs r} =\big\|{\bs f}^{\Sigma}|_{\tau_j}\big\|_2, \qquad j=0,\ldots,J, \]
where ${\bs f}^{\Sigma}|_{\tau_j}$ denotes the subvector of $\bs f^{\Sigma}$
whose entries correspond to the coefficients of the samplets that
are supported on the cluster $\tau_{j}$. 
Similarly to \(\bs{e}_{\bs r}\), we define
\(
\bs{b}_{\bs r} = [b_0^{\bs r}, \ldots, b_J^{\bs r}]
\)
as the vector of diameters of the bounding boxes of the nodes along the branch,
where \( b_j^{\bs r} \) denotes the diameter of the bounding box of the node at
level \( j \). An illustration of this setup is provided in
Figure~\ref{fig:branch}.

\begin{figure}[htb]
    \includegraphics[width=0.5\linewidth]{./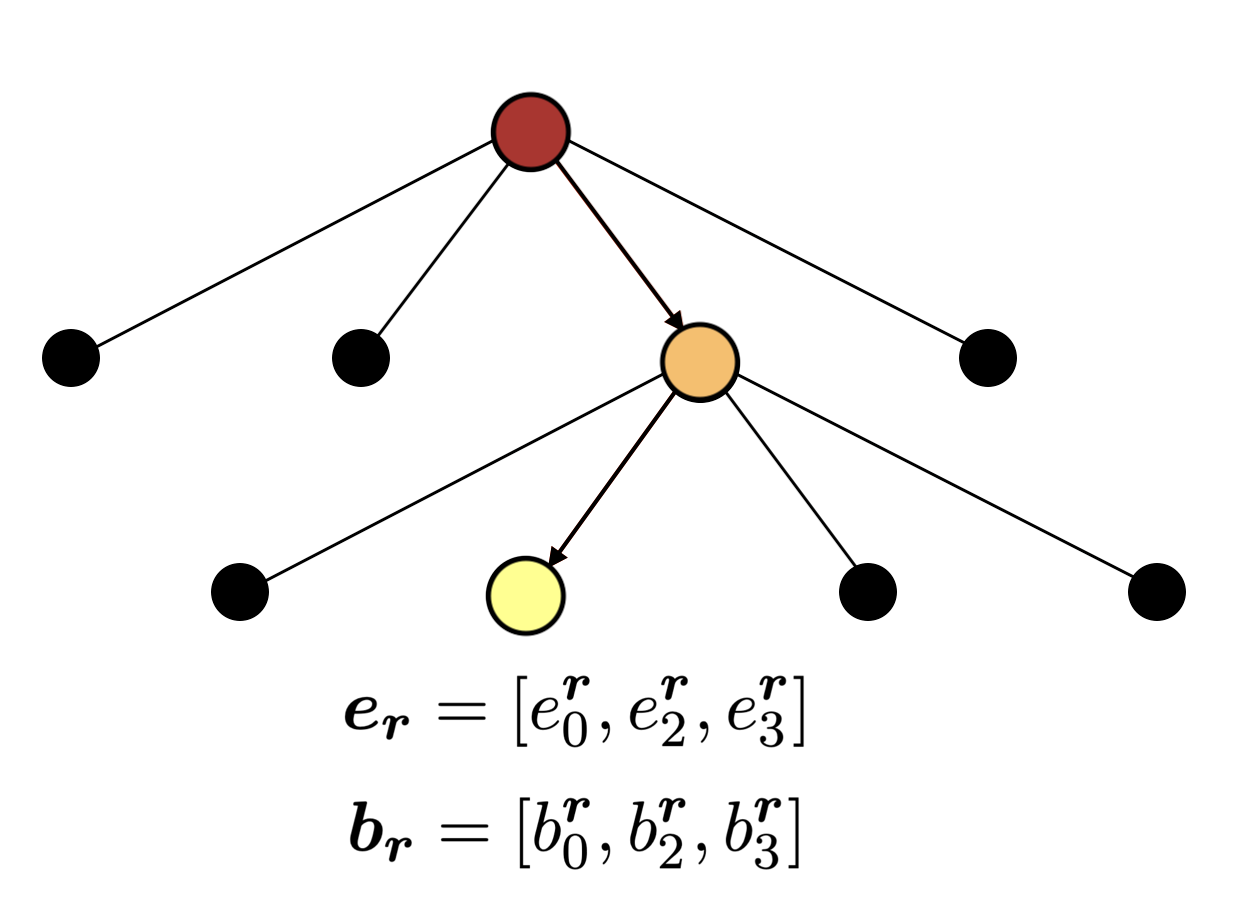}
    \includegraphics[width=0.4\linewidth]{./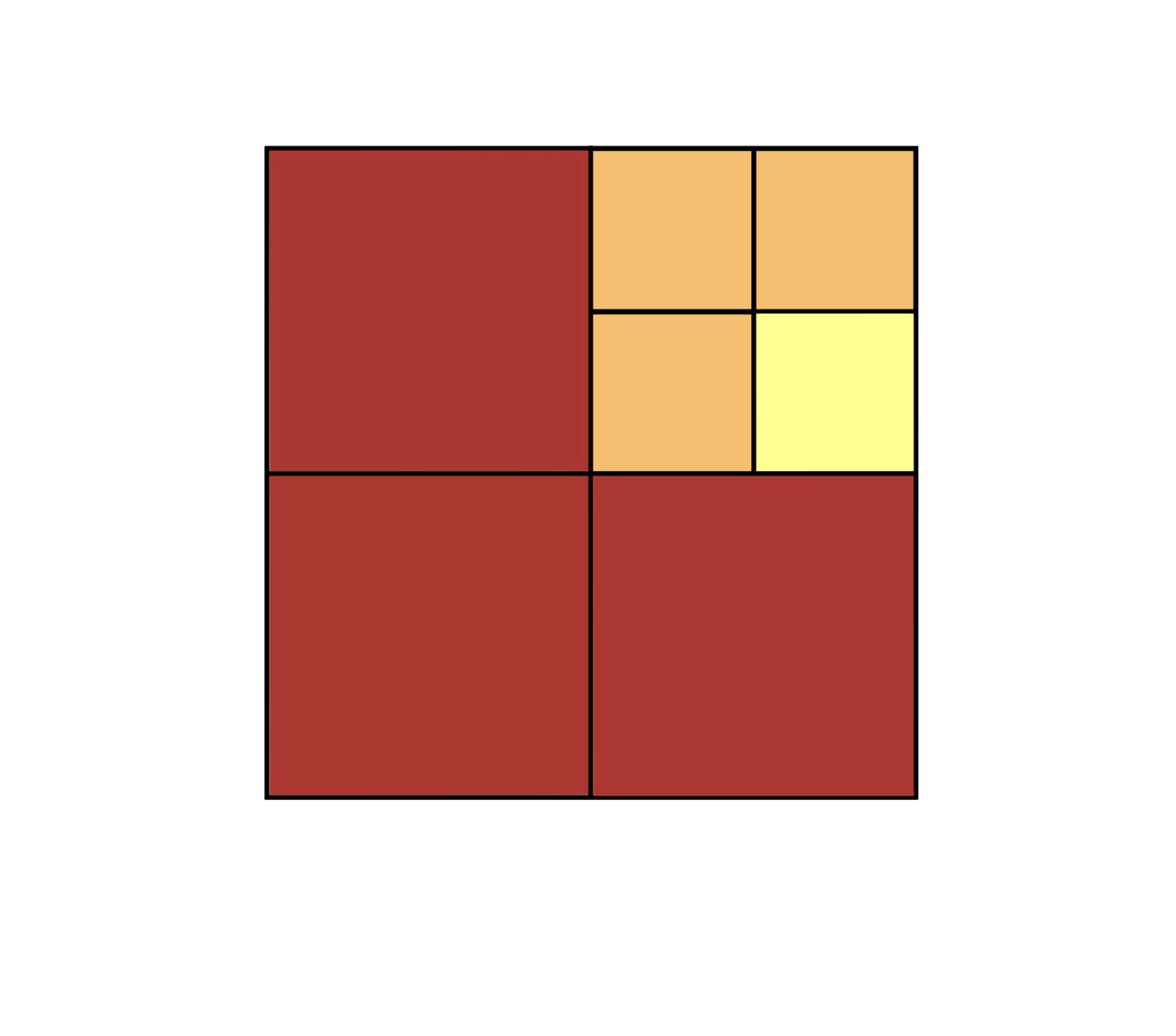}
    \caption{Illustration of a fixed branch in the samplet tree (left) and its
    corresponding nested regions in the computational domain (right). {Each node
    along the branch is associated with the Euclidean norms of the samplet
    coefficients} and a bounding box whose diameter reflects the local
    resolution.}
    \label{fig:branch}
\end{figure}

Exploiting the fact that the diameter of a node at level \( j \) in a
balanced \( 2^d \)-tree satisfies
\[
b_j^{\bs r} \sim 2^{-j},
\]
and using the previously proven Theorem~\ref{thm:final_local_decay},
we estimate the local Hölder 
exponent \( \alpha \) by fitting the decay of the samplet
coefficients $\bs e_{\bs r}$ according to the model
\[
e_j^{\bs r}\approx c \, (b_j^{\bs r})^{\alpha + d/2}.
\]

Taking the logarithm on both sides yields the linearized model
\begin{equation} \label{eq:log_fit}
\log e_j^{\bs r}\approx \log c + \left( \alpha
+ \frac{d}{2} \right) \log b_j^{\bs r}.
\end{equation}
This leads to a linear least-squares problem of the form
\begin{equation} \label{eq:linear_model}
    \bs A \bs x = \bs y,
\end{equation}
where
\[
\bs A = 
\begin{bmatrix}
1 & \log b_0^{\bs r} \\
\vdots & \vdots \\
1 & \log b_J^{\bs r}
\end{bmatrix}
\in \mathbb{R}^{(J+1) \times 2}, \quad
\bs x = 
\begin{bmatrix}
\log c \\
\alpha + d/2
\end{bmatrix} \in \mathbb{R}^{2}, \quad
\bs y = 
\begin{bmatrix}
\log e_0^{\bs r} \\
\vdots \\
\log e_J^{\bs r}
\end{bmatrix} \in \mathbb{R}^{J+1}.
\]

To solve \eqref{eq:linear_model}, we apply the reduced QR decomposition
\[
\bs A = \bs Q \bs R,
\]
where \( \bs Q \in \mathbb{R}^{(J+1) \times 2} \) has orthonormal columns
and \( \bs R \in \mathbb{R}^{2 \times 2} \) is upper triangular. Using the
orthogonality of \( \bs Q \), we rewrite the least-squares residual as
\[
\|\bs A \bs x - \bs y\|_2 = \|\bs Q \bs R \bs x - \bs y\|_2
=\big\|\bs R \bs x - \bs Q^\top \bs y\big\|_2.
\]
Letting \( \bs z = \bs Q^\top \bs y \), we solve the upper-triangular system
\[
\bs R \bs x = \bs z
\]
to obtain the least-squares estimate of the parameters \( \log c \)
and \( (\alpha + d/2) \).

\begin{remark}
    Let us comment on the detection of points or regions where we consider functions
    to be
    infinitely many times differentiable. The polynomial degree $q$,
    see \eqref{vanMoment},
    represents the maximum order of $C^\alpha$ regularity that can be detected by 
    our algorithm. Roughly speaking, it plays the role of ``infinity''.
    In regions where the signal is smooth, at least the finest‐scale samplet
    coefficient $e_J^{\bs r}$ drops to a negligible value. To detect this
    scenario before performing the linear regression \eqref{eq:log_fit}, we
    compute the ratio
  \(
    e_J^{\bs r}/{\|\bs e^{\bs r}\|_2}.
  \)
  If this ratio is close to machine precision, we conclude that no finer
  resolution is needed. In this case, we directly assign the branch the maximal
  H\"older exponent
  \(
    \alpha + d/2 \gets q + 1.
  \)
\end{remark}

The smoothness class detection with samplets is summarized in
Algorithm~\ref{alg:ComputeHolderExponents}.
In synthesis, we detect local smoothness along each branch of the samplet tree 
$\mathcal{T}$ in three steps. First, we build the map \texttt{branchData} by
walking down every branch and collecting two parallel lists: the norms of the
samplet coefficients and the diameters of their bounding boxes. Next, for each
branch, we fit a line to the logarithm of coefficient norms versus the logarithm
of box diameters and record the resulting decay rate in the map
\texttt{branchSlopes}. Finally, we assign the local regularity according to
Theorem~\ref{thm:final_local_decay}.

\begin{algorithm}[htb]
\caption{DFS for Collecting Branch Data}
\label{alg:DFS}
\begin{algorithmic}[1]
\Statex \hspace*{-1.5em}\textbf{\underline{Input}:} Samplet tree \(\mathcal{T}\),
samplet transformed data \(\bs f^{\Sigma}\).
\Statex \hspace*{-1.5em}\textbf{\underline{Output}:} map \texttt{branchData}
associating to each branch \( \bs r\subset\mathcal{T} \) a pair of vectors
\( [e_{\bs r}, b_{\bs r}] \).
\Function{TraverseAndCollect}{\( \tau\), $\bs r$}
\State \({\bs e}_{\bs r} \gets  [{\bs e}_{\bs r},e_{j_\tau}^{\bs r}]\)
\State \({\bs b}_{\bs r} \gets  [{\bs b}_{\bs r},b_{j_\tau}^{\bs r}]\)
\State \({\bs r} \gets  [{\bs r},\tau]\)
\If{\(\tau\) is a leaf}
\State \(\texttt{branchData}[\bs r] \gets [{\bs e}_{\bs r}, {\bs b}_{\bs r}]\) \Comment{If leaf, save coefficients and diameters}
\Else
\For{\(\tau_{\text{child}}\) in \({\texttt{children}(\tau)}\)}
\State \Call{TraverseAndCollect}{\( \tau_{\text{child}}\), $\bs r$}
\Comment{Recursively visit children}
\EndFor
\EndIf
\EndFunction
\State \(e_{\bs r} \gets [\, ]\)
\State \(b_{\bs r} \gets [\, ]\)
\State \Call{TraverseAndCollect}{$\tau=X$, ${\bs r}=[\, ]$} \Comment{Start DFS from root}
\State \Return \(\texttt{branchData}\)
\end{algorithmic}
\end{algorithm}

\begin{algorithm}
\caption{Compute Hölder Exponents}
\label{alg:ComputeHolderExponents}
\begin{algorithmic}[1]
\Statex \hspace*{-1.5em}\textbf{\underline{Input}:} map \(\texttt{branchData}\), 
as produced by Algorithm \ref{alg:DFS}
\Statex \hspace*{-1.5em}\textbf{\underline{Output}:} map \(\texttt{branchSlope}\),
associating each branch
$\bs r \subset \mathcal{T}$ with the samplet coefficients' \\
\hspace{1 cm} decay $(\alpha + d/2)$.

\State \(\texttt{branchSlope} \gets \{\}\)
\For{\(\bs r\) in \(\texttt{branchData}\)}
\State Fit a line to \( \text{log}({\bs e}_{\bs r})\) to get the slope 
$(\alpha + d/2)$ using \eqref{eq:log_fit}
\State \( \texttt{branchSlope}[\bs r] \gets (\alpha + d/2)\)
\EndFor
\State \Return \(\texttt{branchSlope}\)
\end{algorithmic}
\end{algorithm}

\section{Numerical results}\label{sec:Numerics}
In this section, we apply the proposed method to local regularity detection in
one‑, two‑, and three‑dimensional signals, on both gridded and scattered data.
The experiments have been performed on a MacBook Pro with an Apple M2 Max
processor and 32~GB of main memory. In the first experiment, we employ samplet transform with $q+1=5$ vanishing moments, see \eqref{vanMoment}, whereas we employ samplet 
$q+1=3$ vanishing moments for the remaining experiments, classifying signals as
locally smooth when they are at least five and three times differentiable, respectively.

 In the experiments, if $\tau$ is a leaf node, we detect the same local H\"older exponent for every sample 
 \(\big(\bs x,f(\bs x)\big)\) with $\bs x\in \tau$. For this reason, we introduce a mild abuse of language and say that our algorithm assigns the microlocal space $C^\alpha$ to the cluster $\tau$.

\subsection{One-dimensional setting}
First, we consider a signal on $\Omega = [-1,1]$, which presents jumps and
corners. Specifically, we consider
\[
f_1(x) =
\begin{cases} 
6 & \text{if } x < -0.4, \\
0.1 \cdot \lvert 20x + 9 \rvert + 6 & \text{if } -0.4 \leq x < -0.35, \\
0.1 \cdot \lvert 20x + 5 \rvert + 6 & \text{if } -0.35 \leq x < -0.15, \\
0.1 \cdot \lvert 20x + 1 \rvert + 6 & \text{if } -0.15 \leq x < -0.05, \\
6 + \sin(20 \pi x) & \text{if } -0.05 \leq x < 0.55, \\
4-20|x-0.7|(x-0.7) & \text{if } 0.55\leq x \leq 1
\end{cases}
\]
Near jumps, the signal is locally $0$-Hölder continuous, i.e.,
$f\in C^\alpha(\bs x_0)$ with \( \alpha = 0 \), whereas the regularity
increases up to \( \alpha = 1 \) near corners. By
Theorem~\ref{thm:final_local_decay}, we expect a rate of decay of order
slightly smaller than \( \frac{1}{2} \) at jumps and slightly smaller than
\( \frac{3}{2} \) at corners.
In this experiment, we consider a random
uniformly distributed set of one million
points.
The result is shown in Figure~\ref{fig:1d_three_rows}.
\begin{figure}[htb]
  \centering
  \begin{subfigure}{0.8\linewidth}
    \centering
    \begin{tikzpicture}
      \begin{axis}[
        xlabel={$x$},
        grid=both,
        width=\textwidth,
        height=0.5\textwidth,
        legend pos=south west,
        legend style= {font=\small, row sep=0.8pt},
         legend cell align={left}
      ]
        \addplot[
          color=black,
          mark=none,
          line width=1pt
        ] table [x index=0, y index=1, col sep=space] {function_with_color_compressed.txt};
        \addlegendentry{$f(x)$}
      \end{axis}
    \end{tikzpicture}
  \end{subfigure}

  \vspace{1em}

  \begin{subfigure}{0.8\linewidth}
    \centering
    \begin{tikzpicture}
      \begin{axis}[
        xlabel={$x$},
        grid=both,
        ytick={0.5,1,1.5,2,2.5,3,3.5,4,4.5,5},
        width=\textwidth,
        height=0.5\textwidth,
        legend pos=south west,
        legend style= {font=\small, row sep=0.8pt},
         legend cell align={left}
      ]
        \addplot[
          color={rgb,255:red,0; green,87; blue,231},
          line width=1pt
        ] table [x index=0, y index=1, col sep=space] {points_with_color_compressed.txt};
        \addplot[
          color={rgb,255:red,0; green,87; blue,231},
          line width=1pt
        ] table [x index=0, y index=1, col sep=space] {points_with_color_shifted_compressed.txt};
\addlegendentry{\shortstack{$\alpha + \frac{1}{2}$}}

      \end{axis}
    \end{tikzpicture}
  \end{subfigure}

  \vspace{1em}
\caption{\label{fig:1d_three_rows}
Top: Original function $f$ plot. Bottom: Local H\"older exponents.
  Our algorithm assigns the microlocal space
  $C^{0.5-\frac{d}{2}}=C^0$ to clusters intersecting a jump; the space 
  $C^{1.5-\frac{d}{2}}=C^1$ to clusters intersecting the corners; finally, it assigns
  $C^{2.5-\frac{d}{2}}=C^2$ to clusters intersecting singularities on the second derivative we detect. 
}
\end{figure}
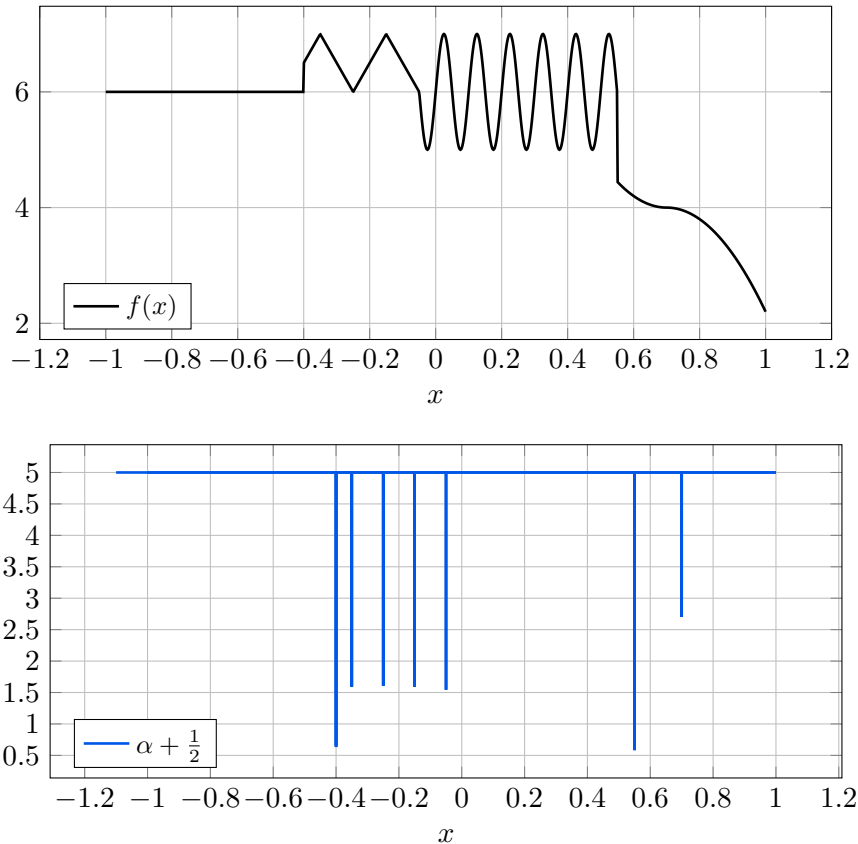
The top row of the figure shows the given signal, while the
bottom row shows the obtained smoothness chart.
As can be seen, all types of singularities as well as the smooth regions
are correctly identified.
\subsection{Two-dimensional setting} 
We consider bivariate signals defined on the unit square
\(\Omega = [0,1]^2\). First, we analyze the standard example of a function with a corner
discontinuity 
\[
    h(x,y)=|x-y|,
\]
see Figure~\ref{fig:2d_corner} for the results. A further, more intriguing, question is to understand whether samplet analysis of singularities is sensitive to pathologies arising in the multivariate framework. For instance, the function
\begin{equation}\label{defg}
g(x,y)=\frac{1}{2}\bigg( \frac{y-0.25}{\sqrt{(x-0.25^2)^2+(y-0.25)^2}} + \frac{(x-0.75)^2(y-0.75)}{(x-0.75)^2+(y-0.75)^2} \bigg),
\end{equation}
merges two classical examples in mathematical analysis of bivariate functions. In both the points $(0.25,0.25)$ and $(0.75,0.75)$, all the directional derivatives exist, but $g$ is not continuous in $(0.25,0.25)$ and not differentiable in $(0.75,0.75)$. The results are displayed in Figure~\ref{fig:2d_jump}. In this numerical experiment, each function is sampled on a
uniform \(2^{11}\times2^{11}\) grid.

\begin{figure}[htb]
  \centering
    \begin{subfigure}{0.7\linewidth}
    \includegraphics[width=0.48\linewidth]{./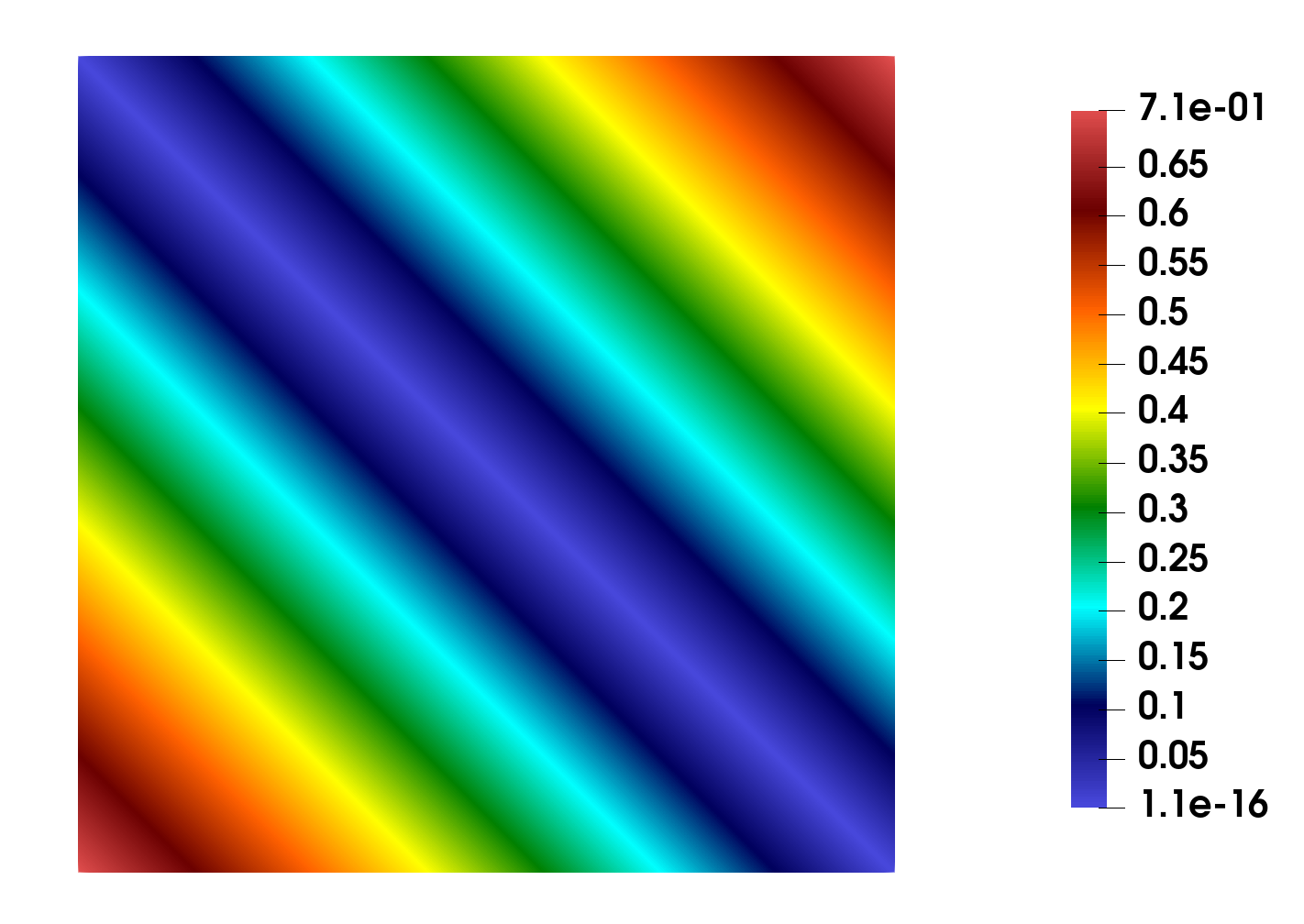}
    \includegraphics[width=0.5\linewidth]{./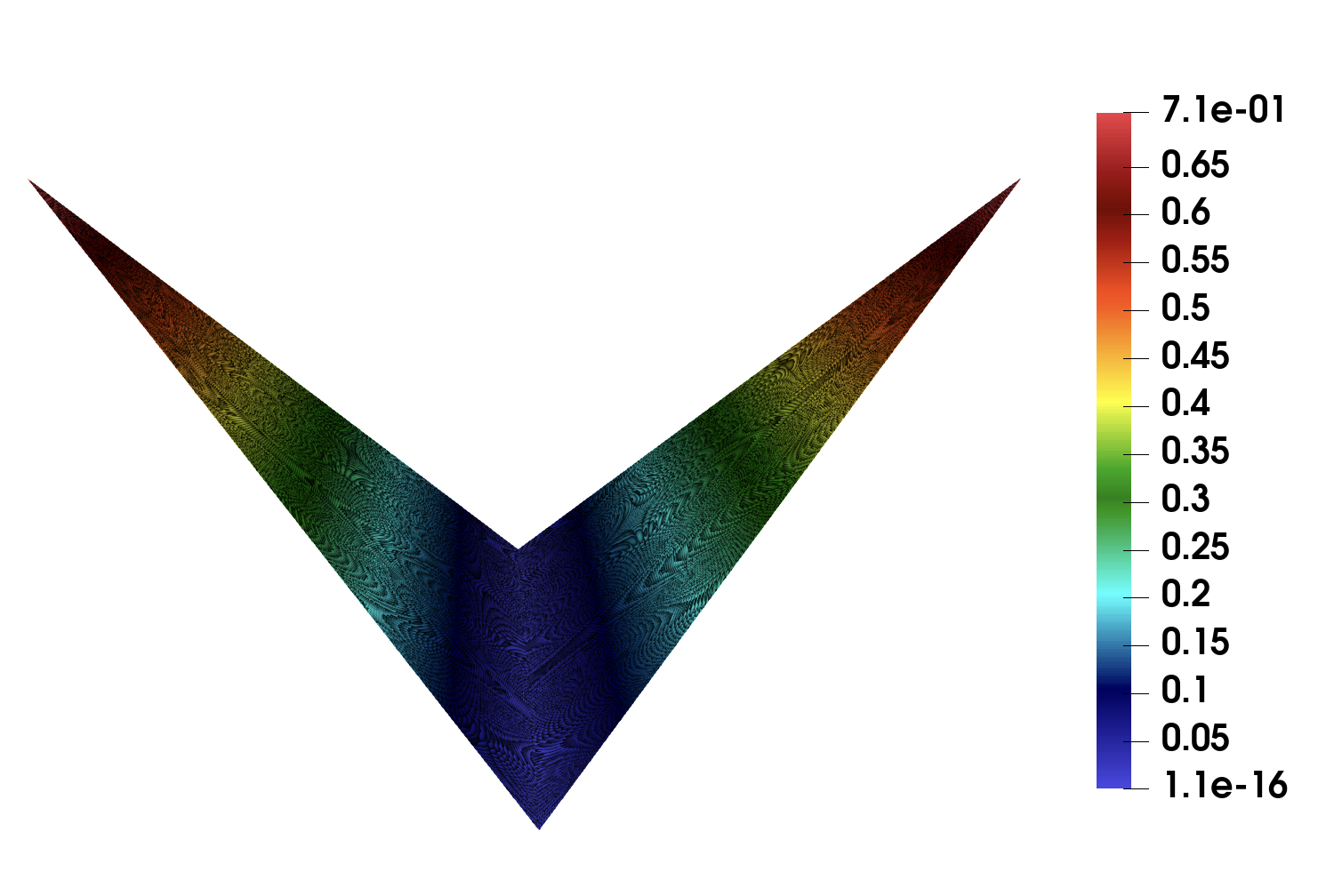}
    \end{subfigure}

    \vspace{0.3 cm}
    
    \begin{subfigure}{0.7\linewidth}
    \includegraphics[width=0.5\linewidth]{./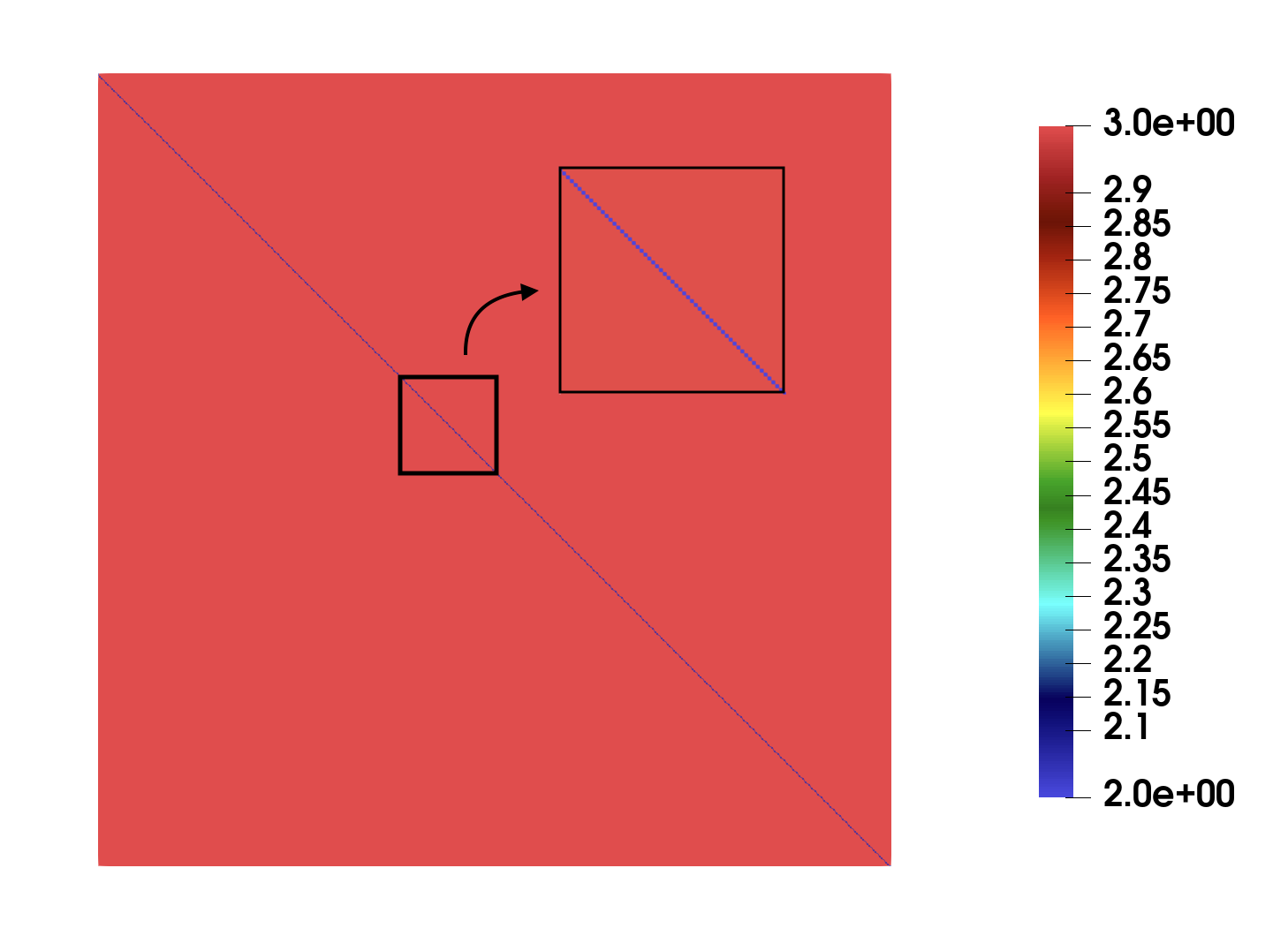}
    \includegraphics[width=0.48\linewidth]{./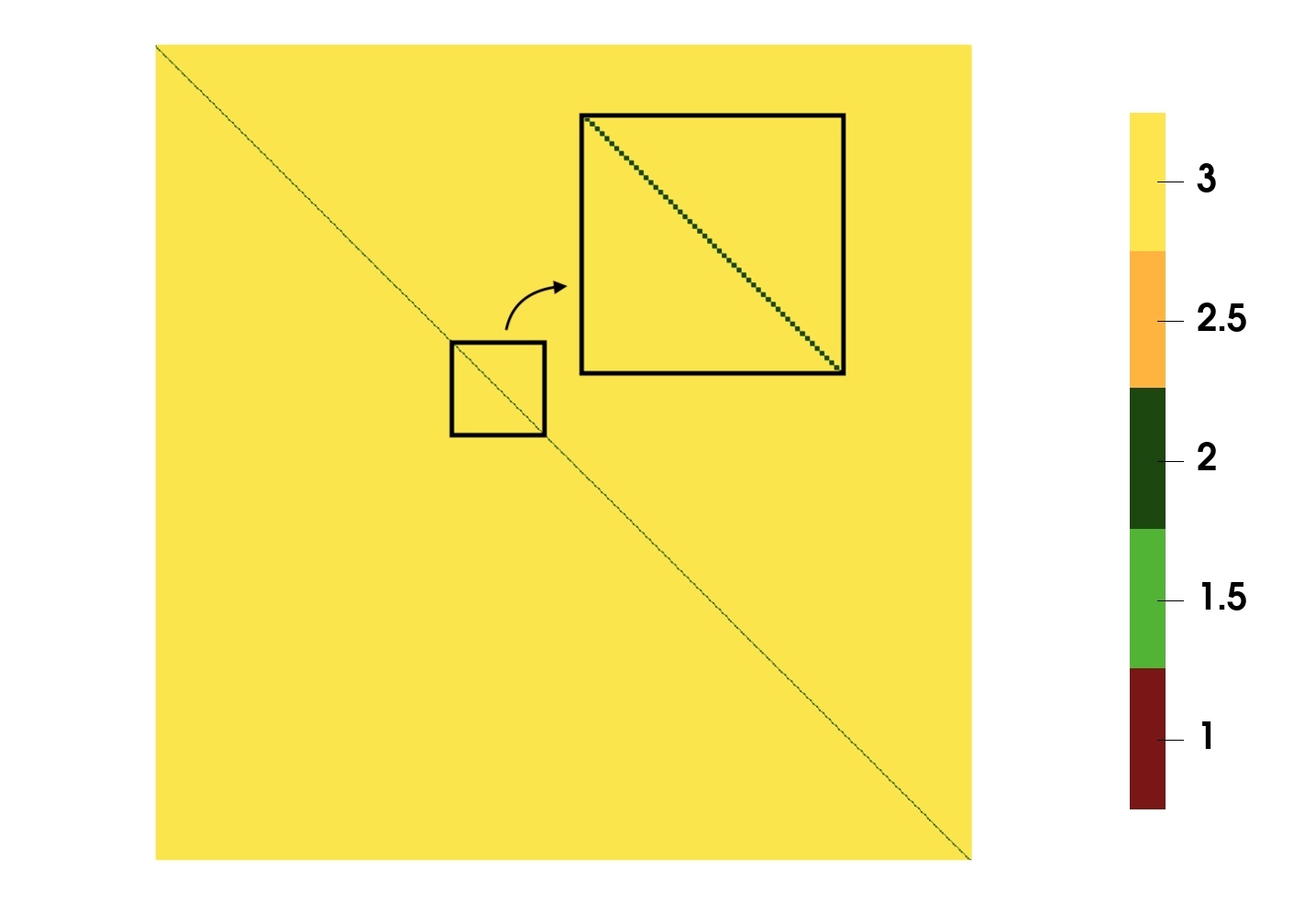}
    \end{subfigure}
    \caption{\label{fig:2d_corner}Top: Two-dimensional (left) and
      three-dimensional (right)  plots of the corner function on the unit
    square $\Omega$. Bottom: Local H\"older exponents, displayed with linear
  (left) and structured (right) colormaps. Our algorithm assigns the space $C^{2-\frac{d}{2}}=C^1$ to clusters intersecting the corner. }
\end{figure}
\begin{figure}[htb]
  \centering

    \begin{subfigure}{0.8\linewidth}
    \includegraphics[width=0.485\linewidth]{./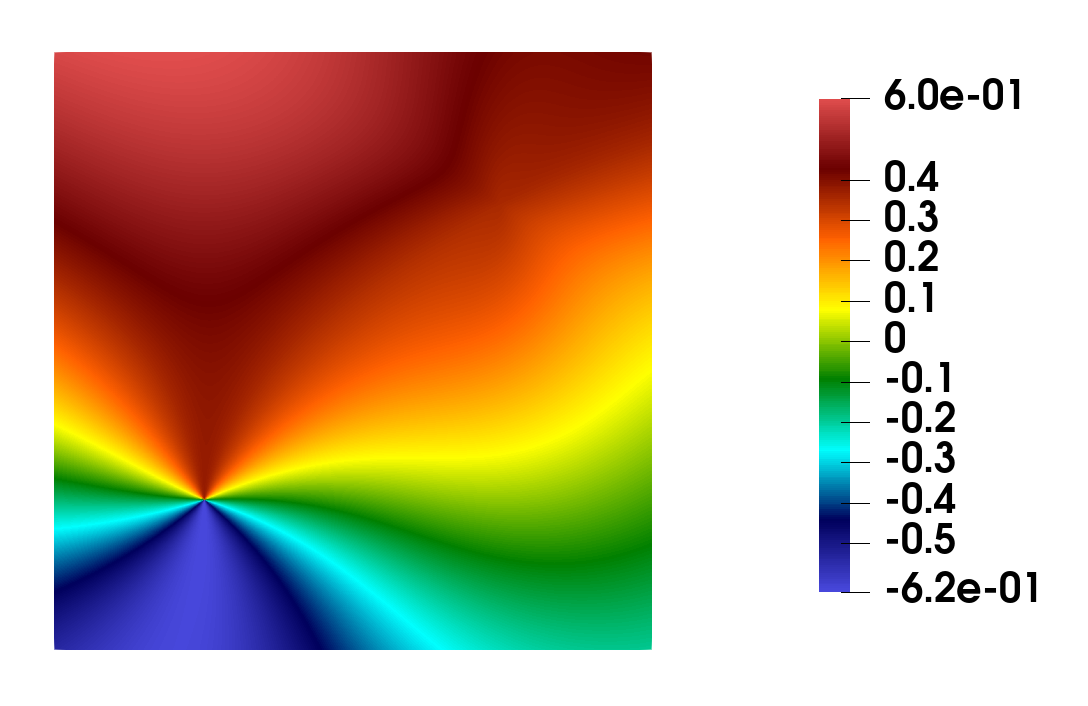}
    \includegraphics[width=0.48\linewidth]{./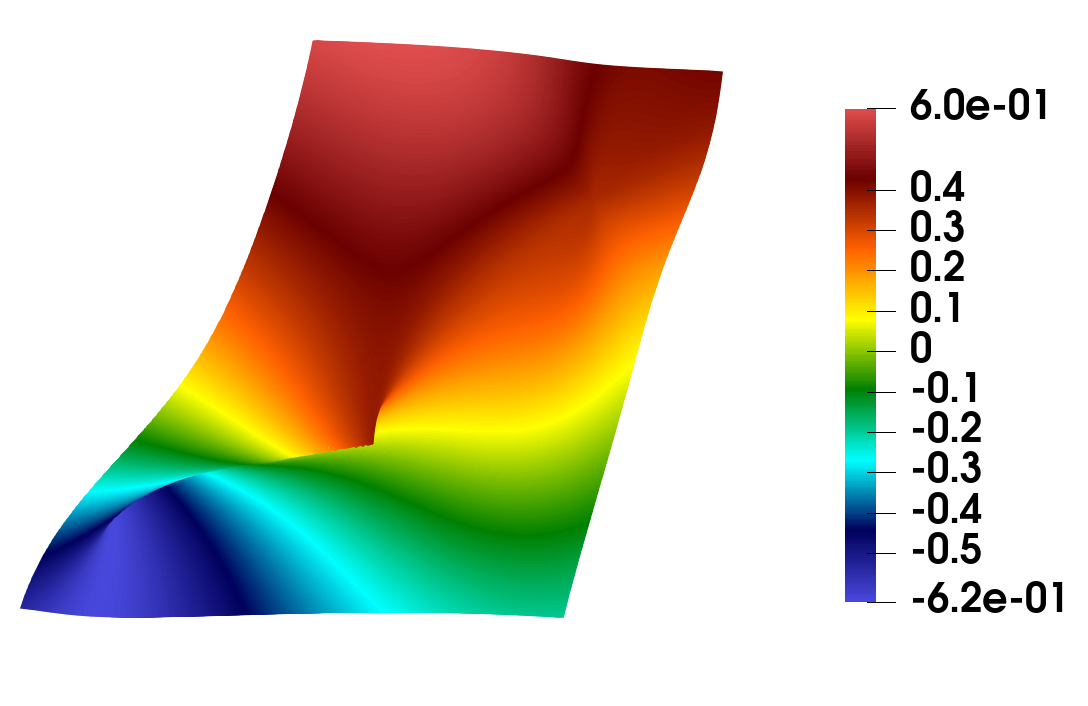}
    \end{subfigure}

    \vspace{0.3 cm}
    
    \begin{subfigure}{0.8\linewidth}
    \includegraphics[width=0.48\linewidth]{./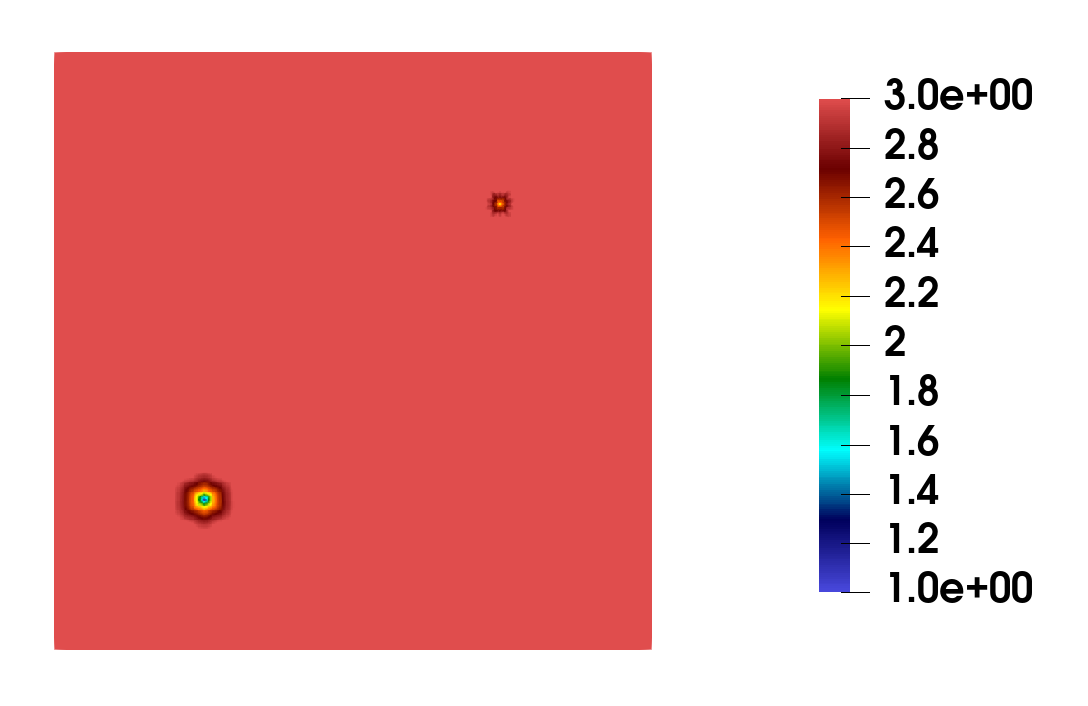}
    \includegraphics[width=0.48\linewidth]{./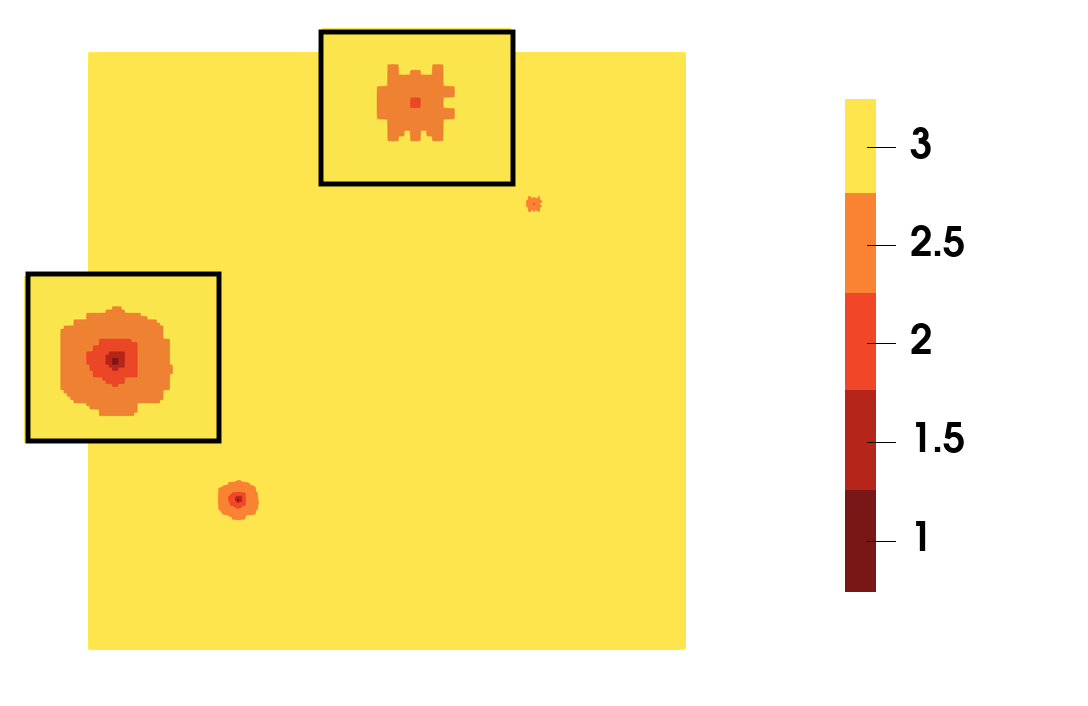}
    \end{subfigure}

    \vspace{0.3 cm}
    \caption{\label{fig:2d_jump}Top: Two-dimensional (left) and
    three-dimensional (right) plots of the function $g$ defined in \eqref{defg} on the unit
  square $\Omega$. Bottom: Local H\"older exponents, displayed with linear
 (left) and structured (right) colormaps. We observe that $g$ is in $C^{1-\frac{d}{2}}\big((0.25,0.25)\big)=C^0\big((0.25,0.25)\big)$ and in $C^{2-\frac{d}{2}}\big((0.75,0.75)\big)=C^1\big((0.75,0.75)\big)$.}
\end{figure}
The results are in agreement with Theorem~\ref{thm:final_local_decay}. In the first example, near
the ridge, the decay rate is 2, implying local H\"older exponents of 1. For the second experiment,
see Figure~\ref{fig:2d_jump}, we detect a H\"older exponent of 0 at the point $(0.25,0.25)$,
where the function $g$ in \eqref{defg} is not continuous, and a H\"older exponent of
1 in $(0.75,0.75)$, where it is continuous, but not differentiable. Therefore, our results agree with the classical
notion of differentiability, meaning that the existence of the partial derivatives in a point does not
affect the local H\"older exponent detected using samplets.

\subsection{Edge detection} Edge detection is also a powerful tool for the automatic identification of
regions of interest or distinct profiles within an image. Hereafter, we apply
our method to the Shepp-Logan
\texttt{Phantom}\footnote{%
\url{https://it.mathworks.com/help/images/ref/phantom.html}} to
pinpoint the sharp boundaries between its homogeneous regions, which are clearly
defined in the original picture.  We convert the phantom to a \(500\times500\)
grayscale image (values ranging from 0 for black to 1 for white) and then apply
our method to highlight its discontinuities. The result is displayed in
Figure~\ref{fig:phantom}.
\begin{figure}[htb]
  \centering

  \begin{subfigure}{0.7\linewidth}
    \begin{center}
    \includegraphics[width=0.5\linewidth]{./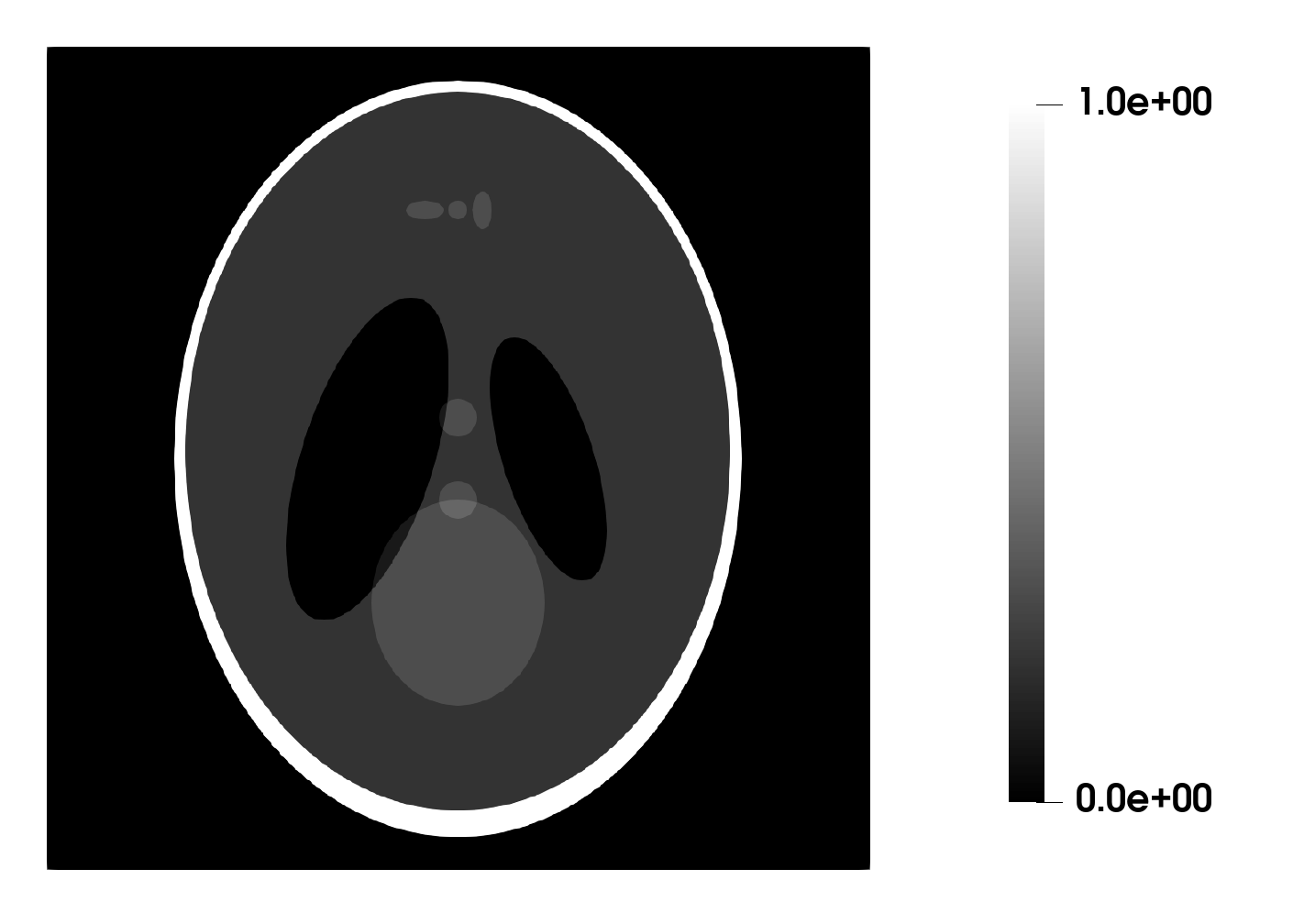}
    \end{center}
    \end{subfigure}

    \vspace{0.3 cm}
    
    \begin{subfigure}{0.7\linewidth}
    \includegraphics[width=0.48\linewidth]{./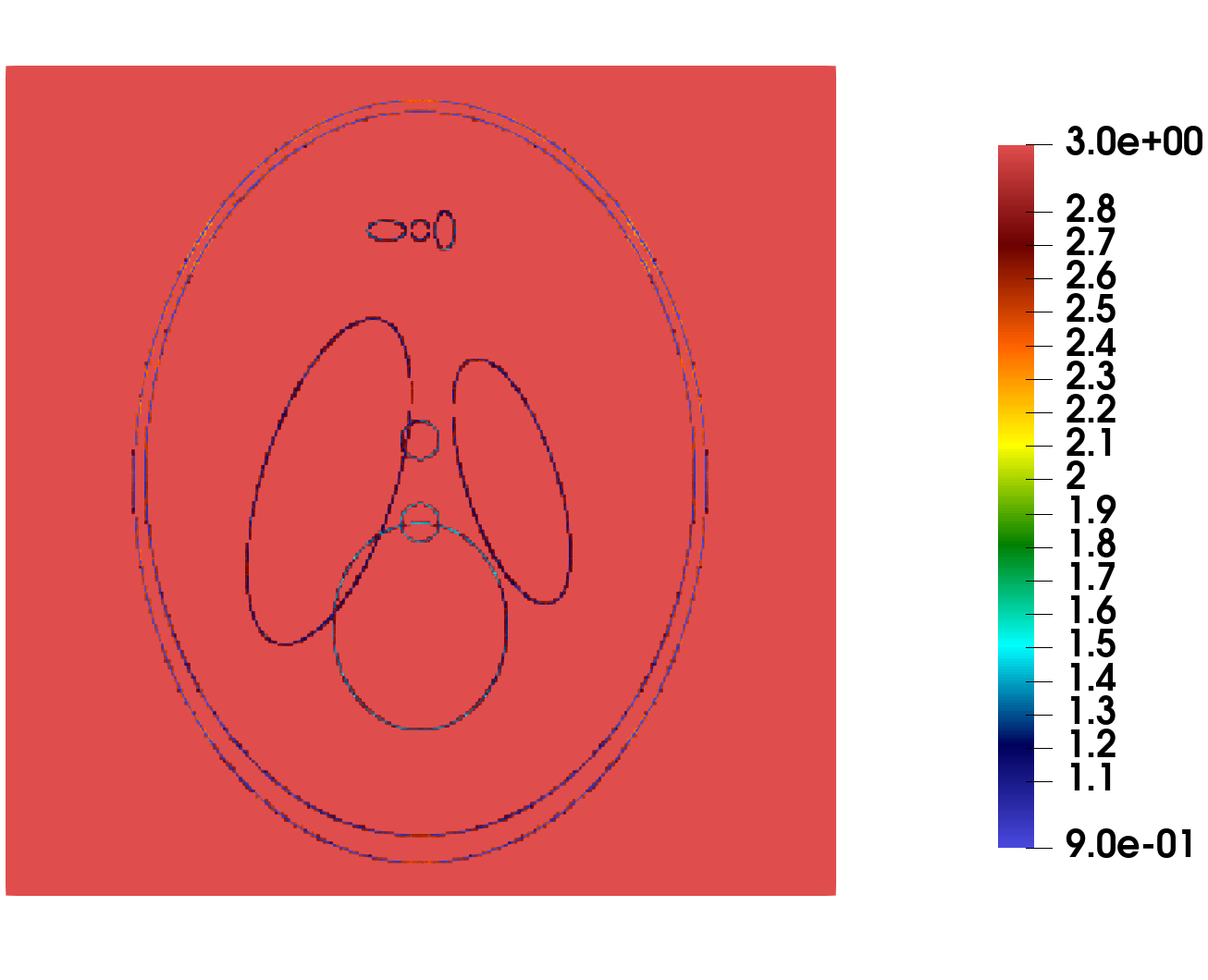}
    \includegraphics[width=0.48\linewidth]{./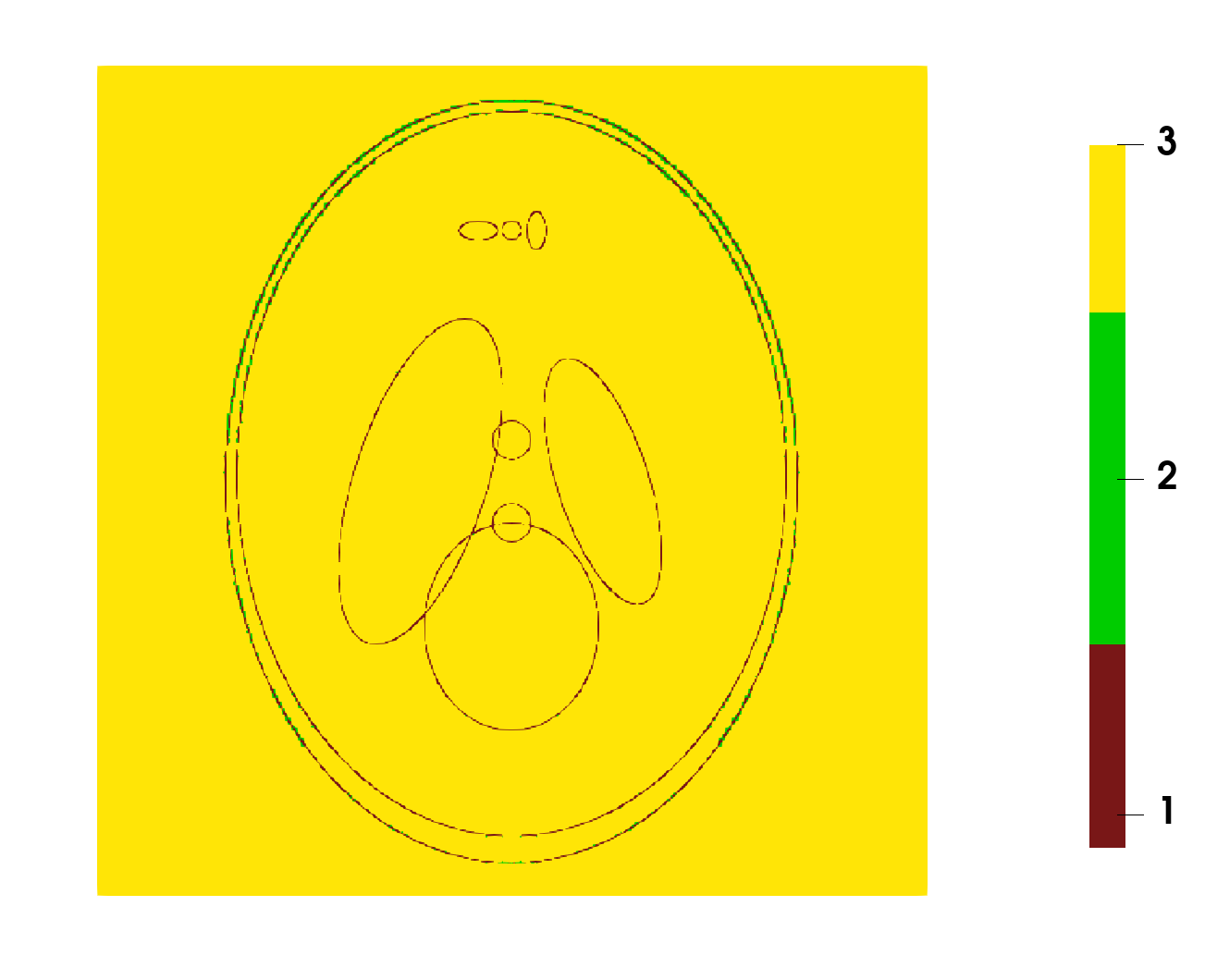}
    \end{subfigure}

    \vspace{0.3 cm}
    \caption{\label{fig:phantom}Top: \texttt{Phantom} function plot. Bottom:
    Local H\"older exponents, displayed with linear (left) and structured
    (right) colormaps. Our algorithm assigns the microlocal space 
   $C^{1-\frac{d}{2}}=C^0$ to clusters intersecting the singularity. }
\end{figure}
As can be seen, all boundaries are correctly identified.

We also test our edge detector on noisy images, including \texttt{MATLAB}’s
\texttt{Visioteam}\footnote{%
\url{https://it.mathworks.com/help/vision/ref/vision.cascadeobjectdetector-system-object.html}}
dataset and a photograph of Lugano (Switzerland). Although the picture noise
introduces many spurious high-frequency discontinuities, our method successfully
highlights the true object boundaries. To visualize them, we threshold the local
decay slopes, fitted with Algorithm~\ref{alg:ComputeHolderExponents}, plotting
only those regions where the estimated slope falls below 1.75, being able to
capture the jumps plus some possible noise around the singularities. 
These experiments are performed at a resolution of \(2048\times2048\) pixels.
The resulting singularity detection is shown in Figures~\ref{fig:workers}
and~\ref{fig:Lugano}, respectively.  
\begin{figure}[htb]
  \centering

  \begin{subfigure}{0.8\linewidth}
    \begin{center}
    \includegraphics[width=0.45\linewidth]{./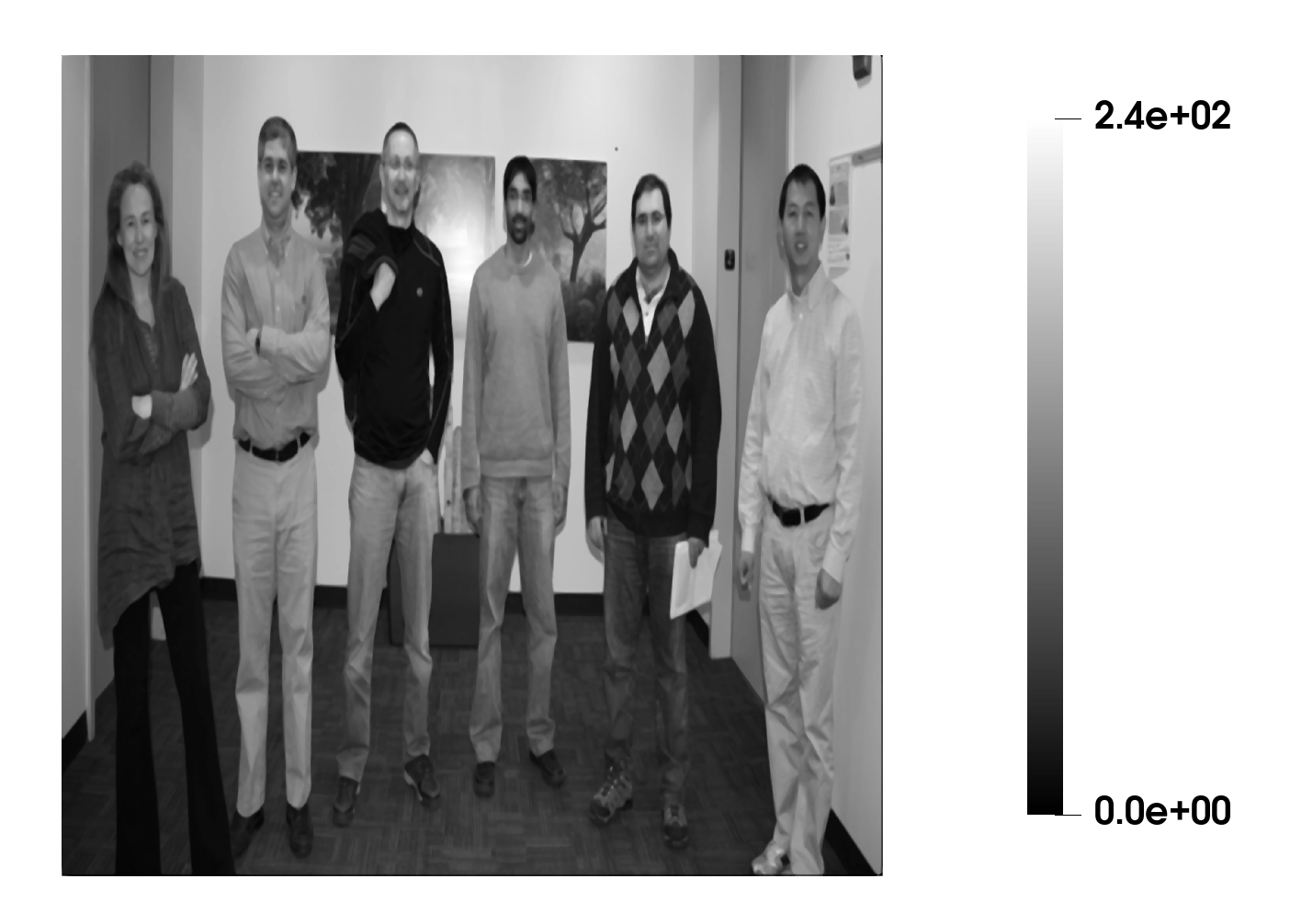}
    \end{center}
    \end{subfigure}

    \vspace{0.3 cm}
    
    \begin{subfigure}{0.8\linewidth}
    \centering
    \includegraphics[width=0.3\linewidth]{./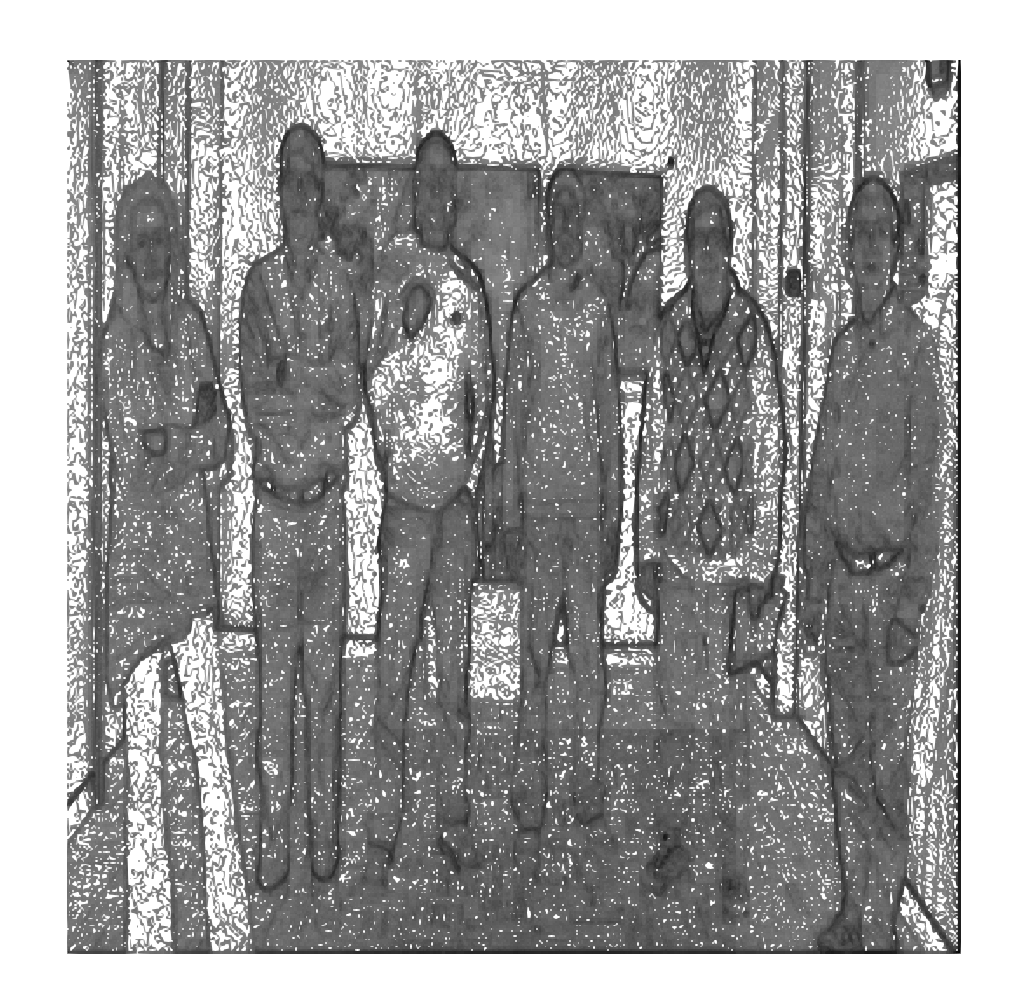}
    \quad \quad \quad 
    \includegraphics[width=0.42\linewidth]{./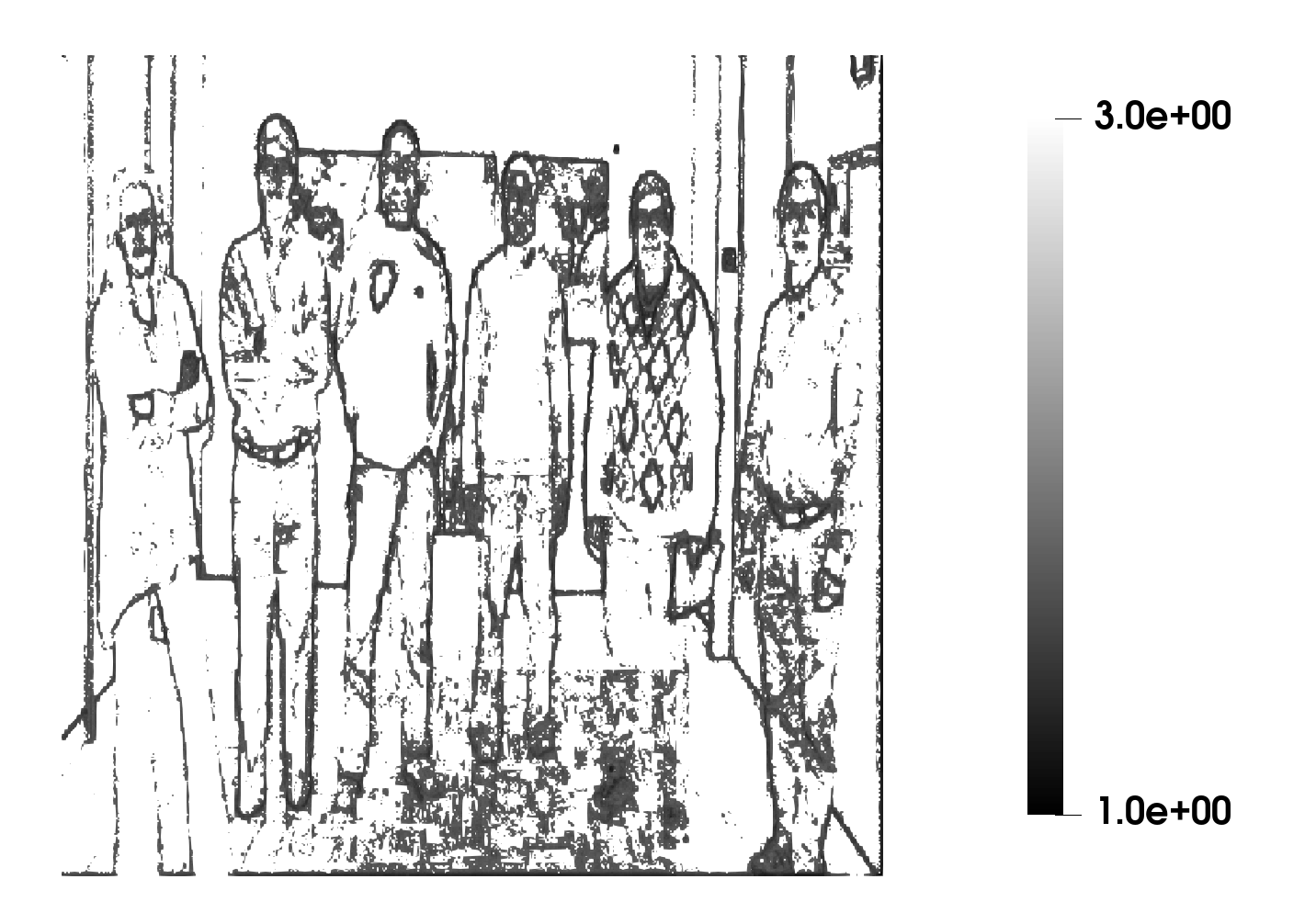}
    \end{subfigure}
    \caption{\label{fig:workers}%
      Top: \texttt{Visionteam} picture from \texttt{Matlab}.
    Bottom left: local H\"older exponents, where the image noise introduces
    spurious high-frequency exponents. Bottom right: thresholded coefficient
    map showing only values below 1.75, which reduces noise and better reveals
    the true boundaries.}
\end{figure}
\begin{figure}[htb]
  \centering

  \begin{subfigure}{0.8\linewidth}
    \begin{center}
    \includegraphics[width=0.44\linewidth]{./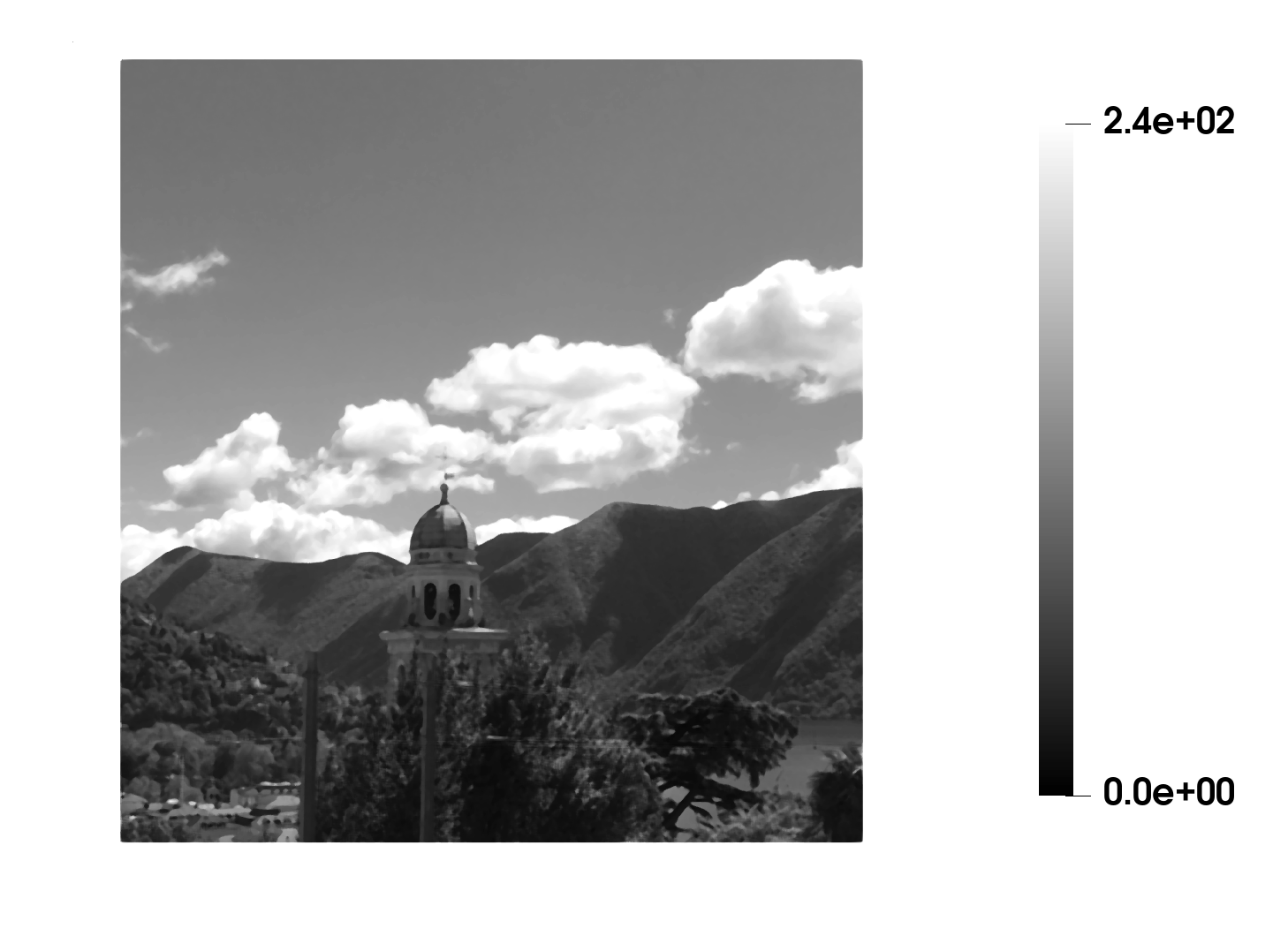}
    \end{center}
    \end{subfigure}

    \vspace{0.1 cm}
    
    \begin{subfigure}{0.8\linewidth}
    \centering
    \includegraphics[width=0.33\linewidth]{./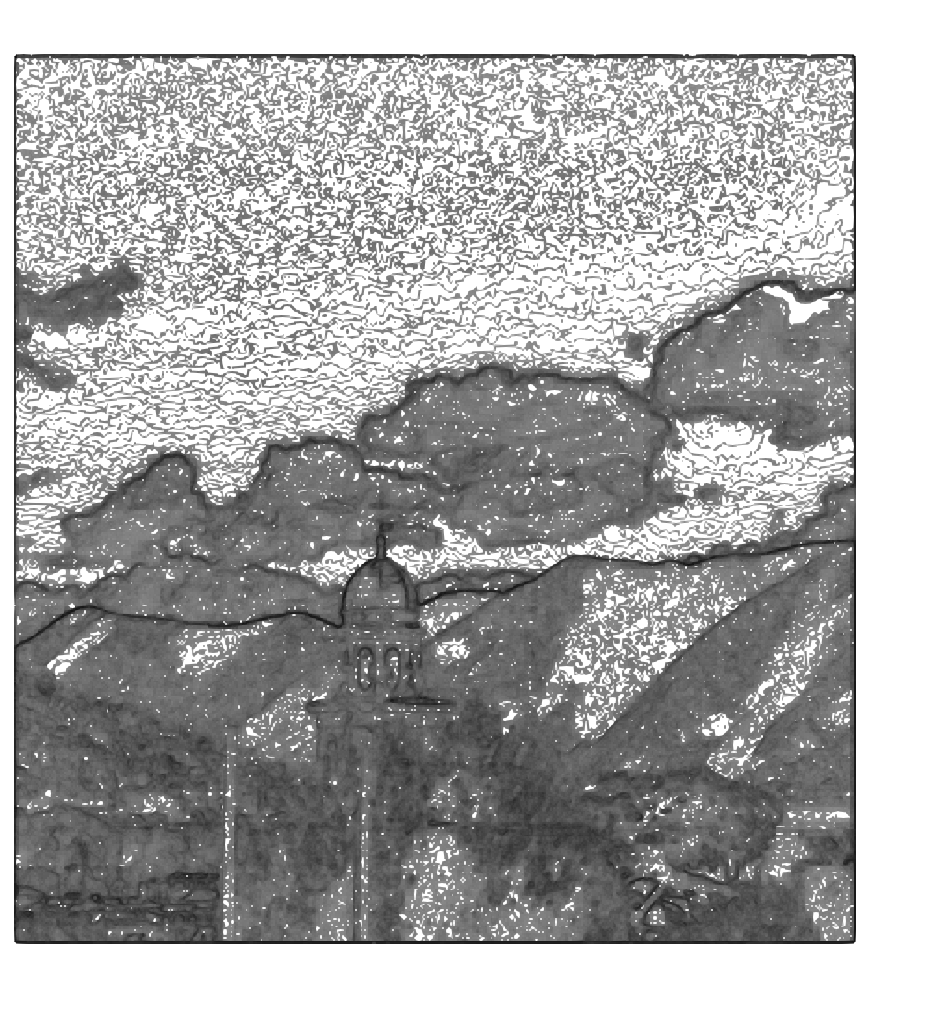}
    \quad \quad \quad 
    \includegraphics[width=0.43\linewidth]{./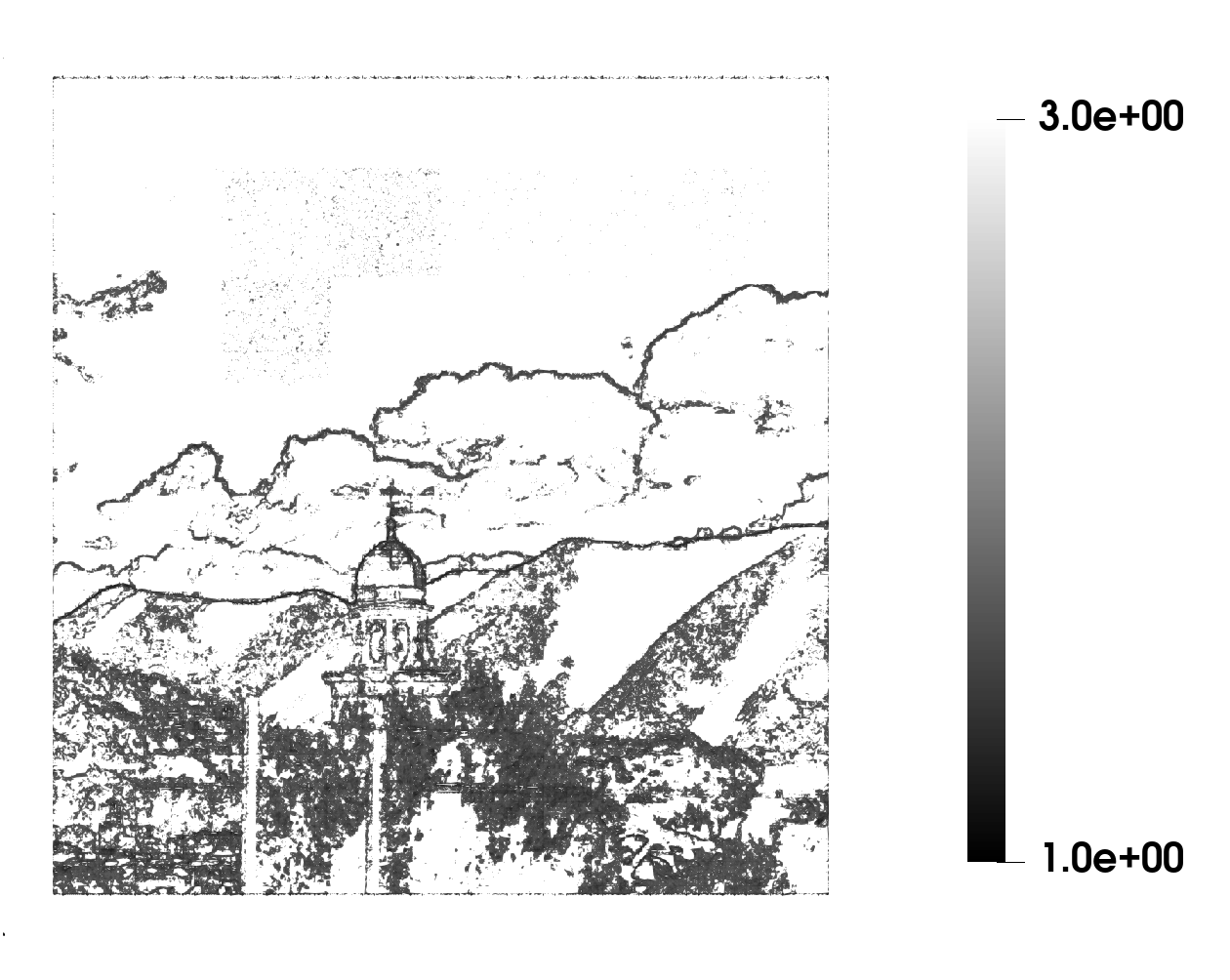}
    \end{subfigure}
    \caption{\label{fig:Lugano}%
    Top: photograph of the city of Lugano, Switzerland. Bottom left:
    local H\"older exponents, where the image noise introduces spurious
    high-frequency exponents. Bottom right: thresholded coefficient map showing
      only values below 1.75, which reduces noise and better reveals
    the true boundaries.}
\end{figure}
As can be seen of the bottom rows of each of the figures, the boundaries are
correctly identified after removing the noise.

\subsection{Surface Detection}
We show that the proposed method works also on manifolds with scattered data.
We consider sample $4$ million random
points on the unit sphere $\mathbb{S}^2$.

For a point $\mathbf{p} = (x, y, z) \in \mathbb{S}^2$ with spherical coordinates
$(\theta, \phi)$ where $\theta = \arctan(y/x)$ is the azimuthal angle and
$\phi = \arccos(z)$ is the polar angle, we define the test function as:
\begin{equation}
f(\theta, \phi) = \mathcal{H}\big(g(\theta, \phi)\big)
\end{equation}
where $\mathcal{H}$ denotes the Heaviside step function and $g(\theta, \phi)$ is
a composite pattern function given by:
\begin{equation}
g(\theta, \phi) = 0.5\sin(3\theta)\sin(2\phi) + 0.3\cos(2\theta)\cos(\phi)
+ 0.2\sin(4\theta)\sin^2(\phi)
\end{equation}

This construction creates a highly irregular boundary pattern that exhibits
simultaneous rotational symmetries of different orders. The first term
$\sin(3\theta)\sin(2\phi)$ generates a six-fold azimuthal symmetry with polar
variation, while the second term $\cos(2\theta)\cos(\phi)$ contributes four-fold
symmetry with emphasis towards the poles. The third component
$\sin(4\theta)\sin^2(\phi)$ adds eight-fold azimuthal structure concentrated
near the equatorial region. 
The results are shown in Figure~\ref{fig:sphere}. 
\begin{figure}[htb]
  \centering    
    \begin{subfigure}{0.8\linewidth}
    \includegraphics[width=0.45\linewidth]{./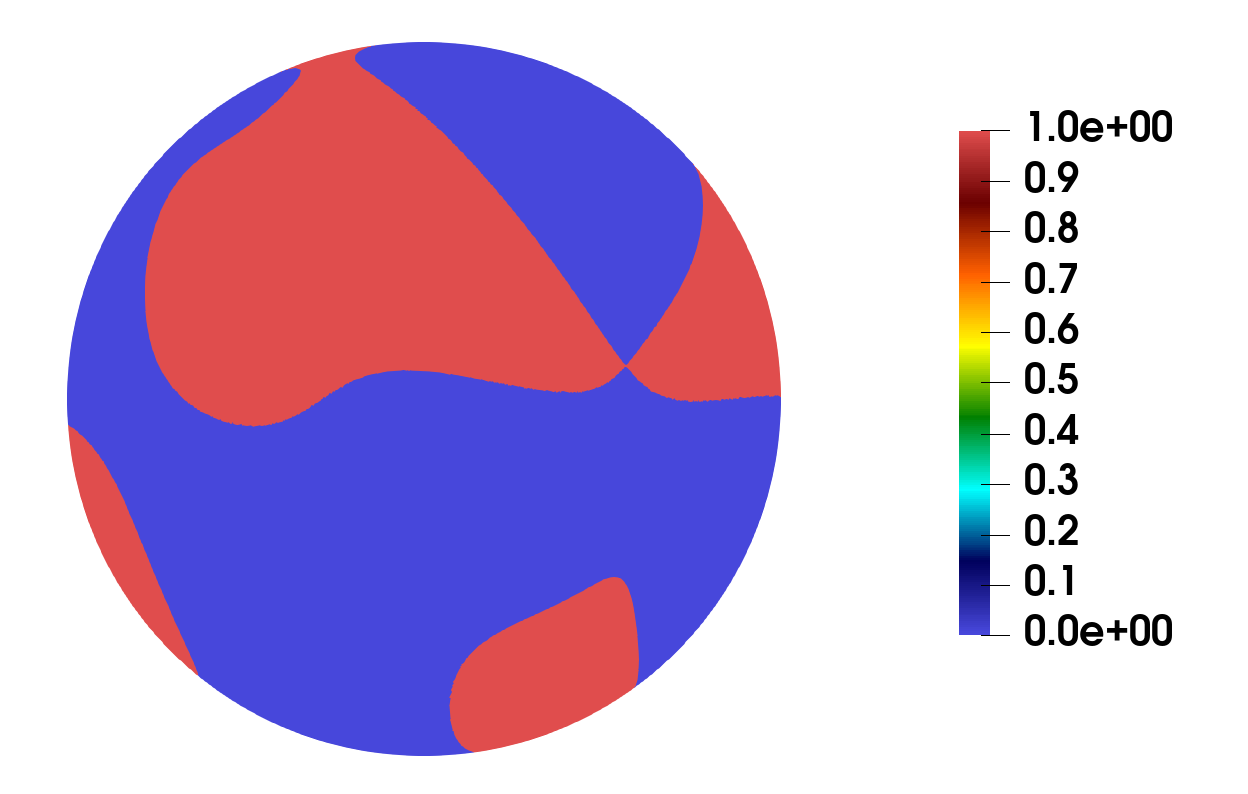}
    \includegraphics[width=0.45\linewidth]{./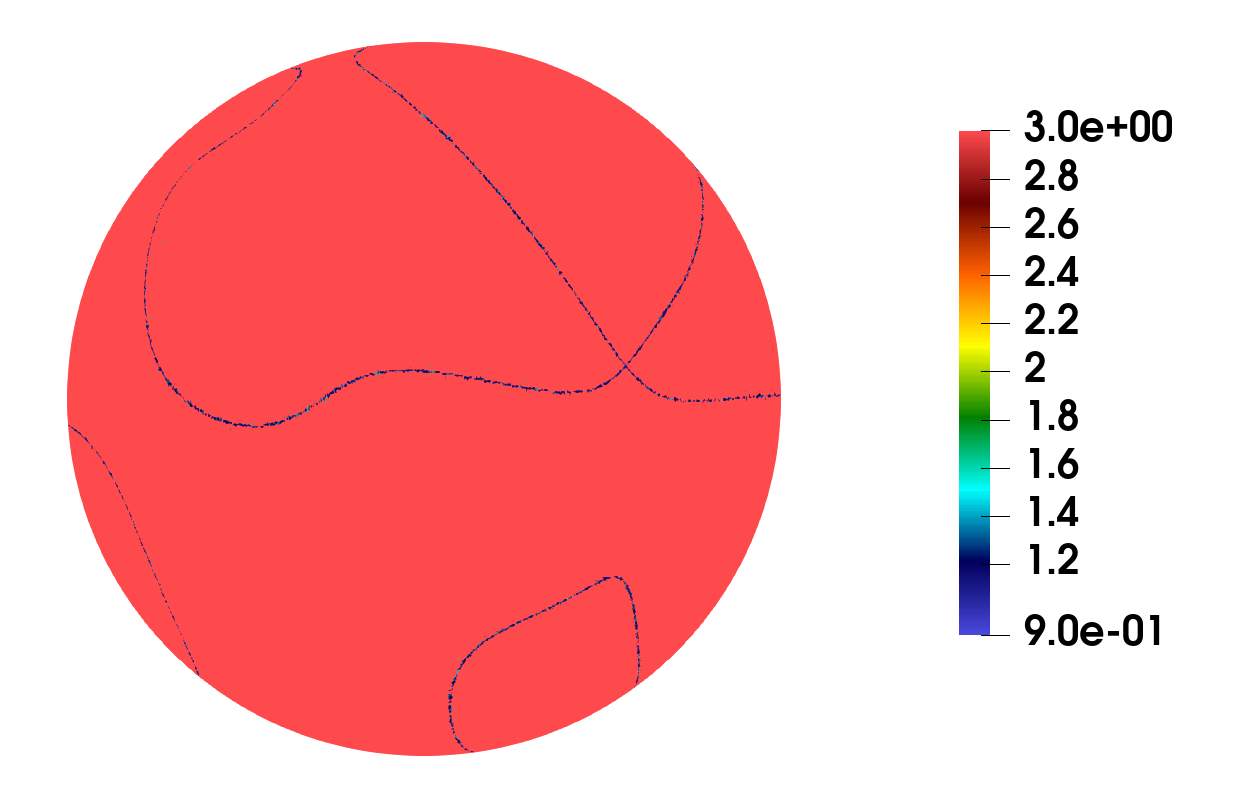}
    \end{subfigure}
    \caption{  \label{fig:sphere}Left: Plots of the analyzed function on the unit
    sphere. Right: Local H\"older exponents. Our algorithm assigns the microlocal space $C^{1-\frac{d}{2}}=C^0$ to clusters intersecting the
    singularity. }
\end{figure}
As can be seen, the local analysis over any small patch boundary yields the
H\"older exponent of 0.

We also consider a more complex surface: the \texttt{Stanford Lucy} model from
the Stanford 3D Scanning 
Repository\footnote{\url{https://graphics.stanford.edu/data/3Dscanrep/}}. We are
provided with the mesh data for the Lucy surface and use only the vertex
coordinates, which are approximately 1.4 million points, after removing
non-manifold vertices to clean the dataset.

To construct a jump and a corner function, we embed the Lucy surface within a
unit cube. We then introduce the singularities within the cube and project them
onto the Lucy surface. Specifically, for visualization purposes, we create a
jump along the cube’s diagonal and a corner on the plane $\bs{x} = 0.4$. The
results are shown in Figure~\ref{fig:lucy}.
\begin{figure}[htb]
    \begin{subfigure}{0.9\linewidth}
    \includegraphics[width=0.28\linewidth]{./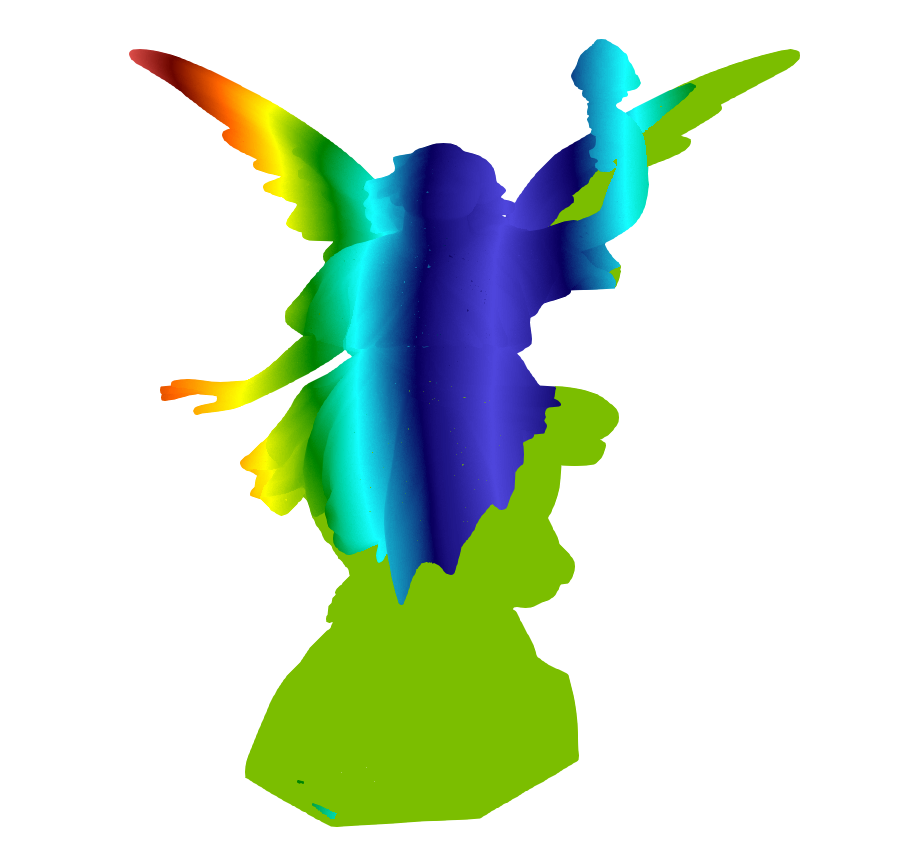}
    \includegraphics[width=0.28\linewidth]{./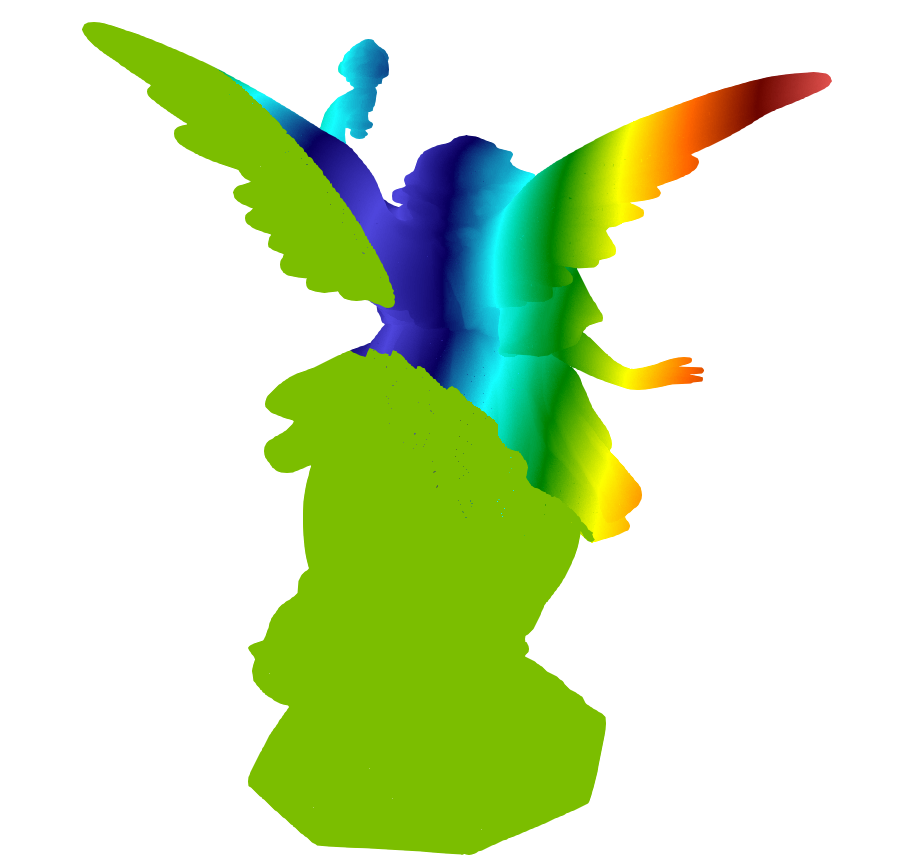}
    \includegraphics[width=0.38\linewidth]{./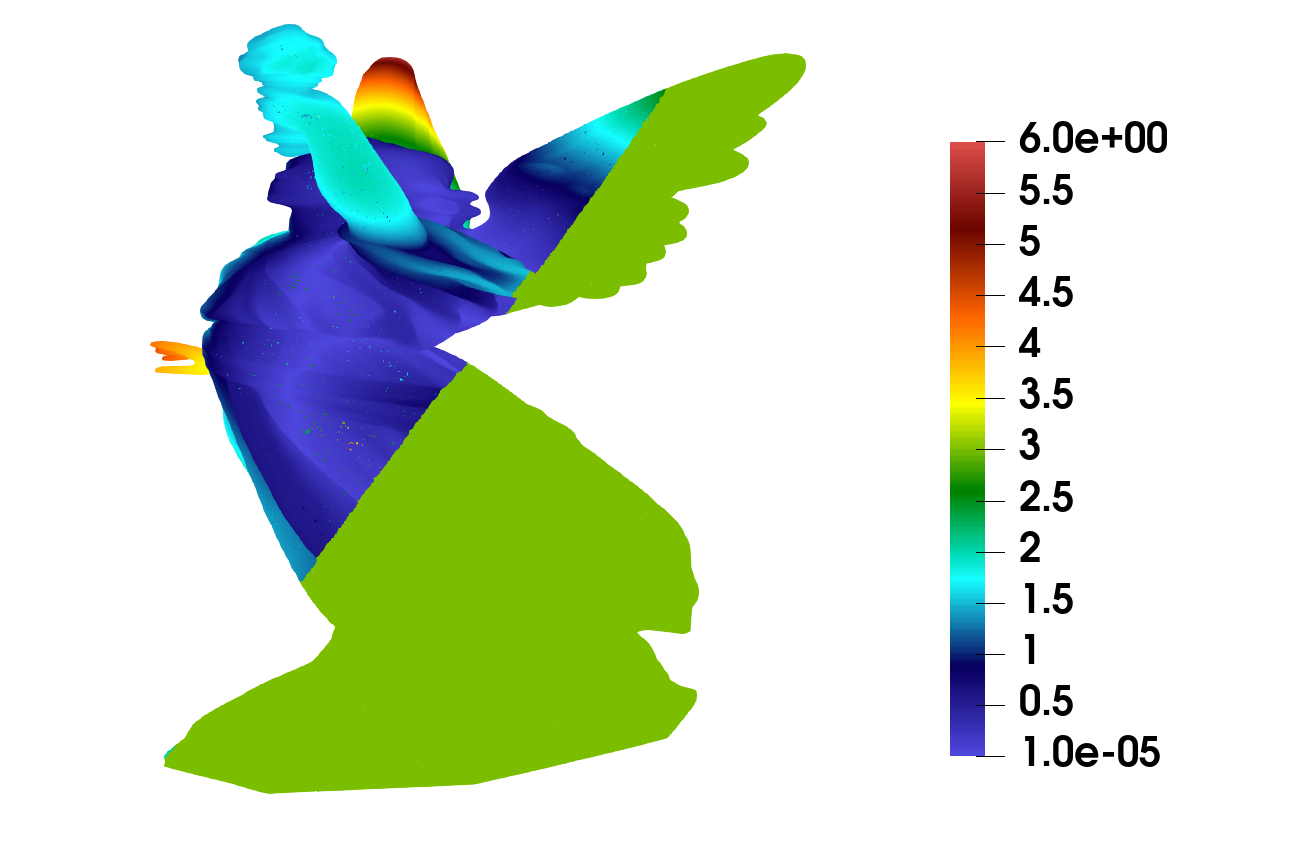}
    \end{subfigure}

    \vspace{0.3 cm}
    
    \begin{subfigure}{0.9\linewidth}
    \includegraphics[width=0.28\linewidth]{./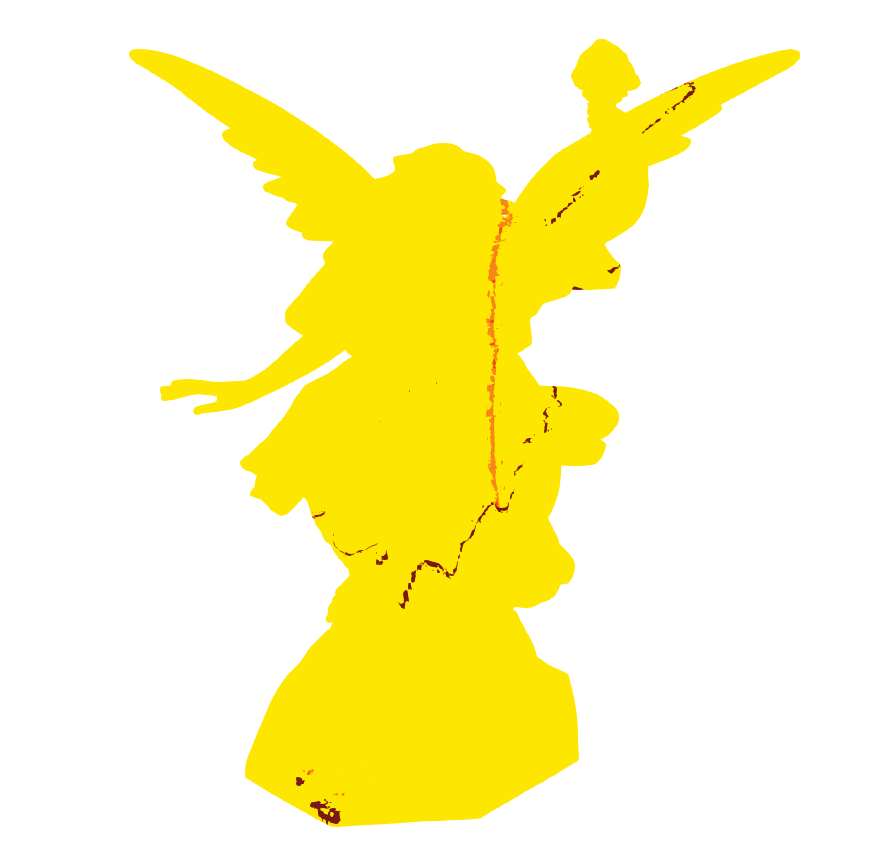}
    \includegraphics[width=0.28\linewidth]{./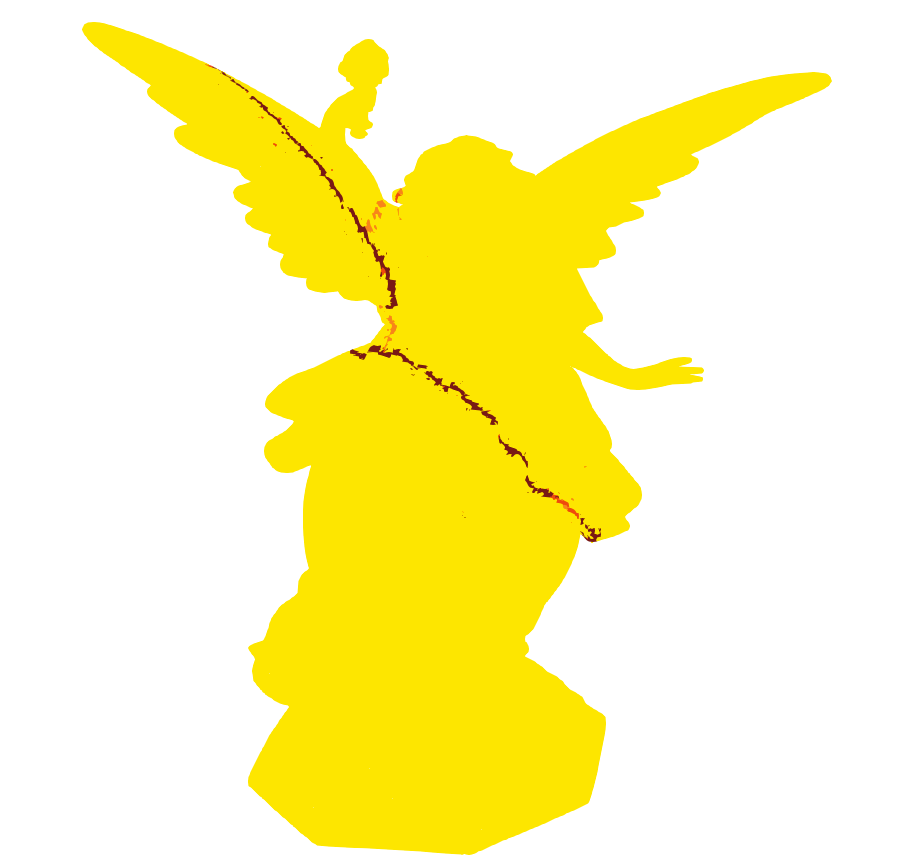}
    \includegraphics[width=0.38\linewidth]{./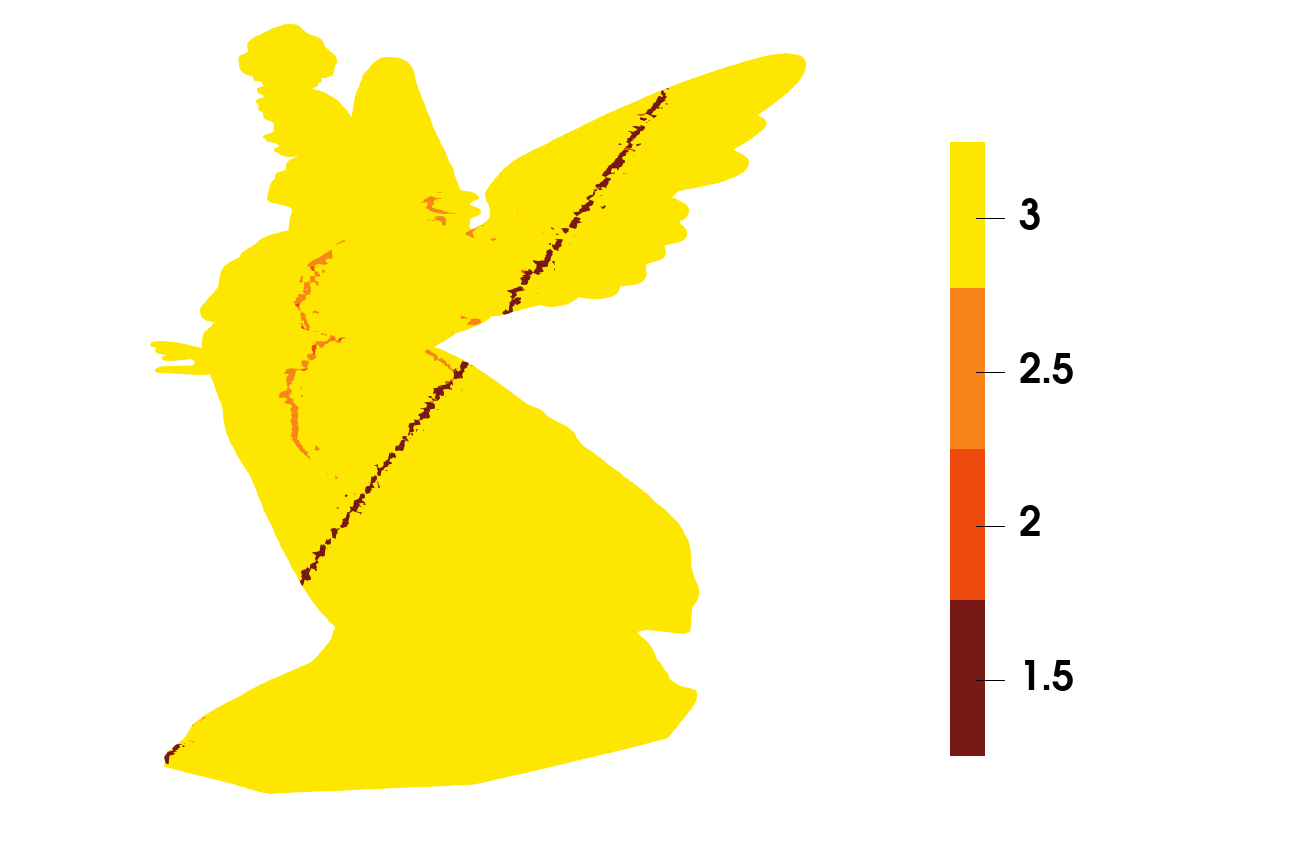}
    \end{subfigure}

    \caption{\label{fig:lucy}%
Top: Different angles of the Lucy domain with a jump and a corner
    function. Bottom: Local H\"older exponents. Our algorithm assigns the microlocal space $C^{1.5-\frac{d}{2}}=C^0$ to clusters intersecting the
  jump, and 
$C^{2.5-\frac{d}{2}}=C^1$ to clusters intersecting the corner.}
\end{figure}
The bottom row of the figure shows the obtained smoothness chart.
As can be seen, all types of singularities are correctly identified.

\subsection{Three-dimensional setting}
We extend our analysis to a volumetric setting. We embed the
\texttt{Stanford Bunny}, also from the Stanford 3D Scanning
Repository\footnote{\url{https://graphics.stanford.edu/data/3Dscanrep/}}, into the
unit cube. Using \texttt{libigl}\footnote{\url{https://github.com/libigl}}
to compute signed distance functions, we distinguish the points in the interior of
the bunny from those of the exterior.
For the points inside the bunny, we define a function \( f_{\text{in}} \), and
for the others, we define \( f_{\text{out}} \), as follows  
\begin{equation}
    f_{\text{in}}(\bs x) = 10 \exp \left( \frac{-\|\bs x
    - \bar{\bs x}\|_2^2}{2 \sigma^2} \right)
    \cdot \mathds{1}_{\text{in}}(\bs x),
\end{equation}
\begin{equation}
    f_{\text{out}}(\bs x) = \|\bs x 
    - \bs x_c\|_2 \cdot \mathds{1}_{\text{out}}(\bs x),
\end{equation}
where \(\bar{\bs x}\) is the barycenter of the bunny, \(\bs x_c\) denotes
the lower-left corner of the cube, and \(\mathds{1}_{\text{in}}(\bs x)\) and
\(\mathds{1}_{\text{out}}(\bs x)\) are indicator functions. The result is 
illustrated in Figure \ref{fig:bunny}.  
\begin{figure}[htb]
  \centering

    \begin{subfigure}{0.8\linewidth}
    \includegraphics[width=0.48\linewidth]{./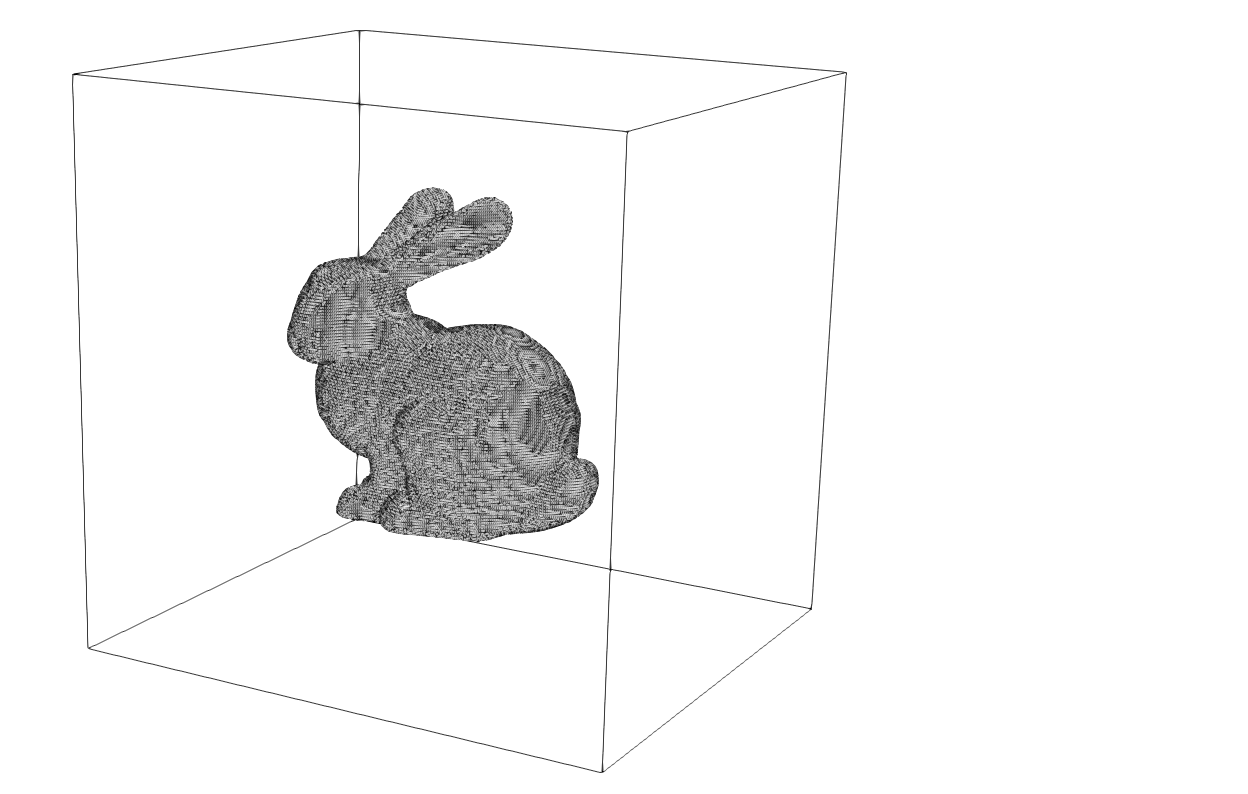}
    \includegraphics[width=0.48\linewidth]{./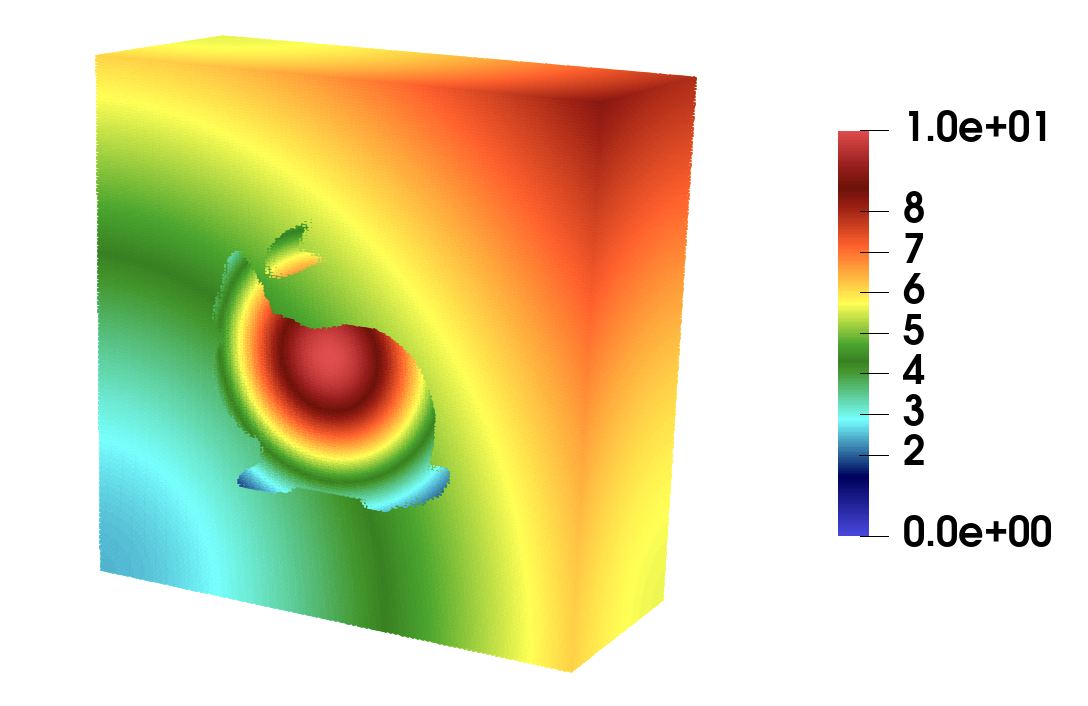}
    \end{subfigure}

    \vspace{0.3 cm}
    
    \begin{subfigure}{0.8\linewidth}
    \includegraphics[width=0.48\linewidth]{./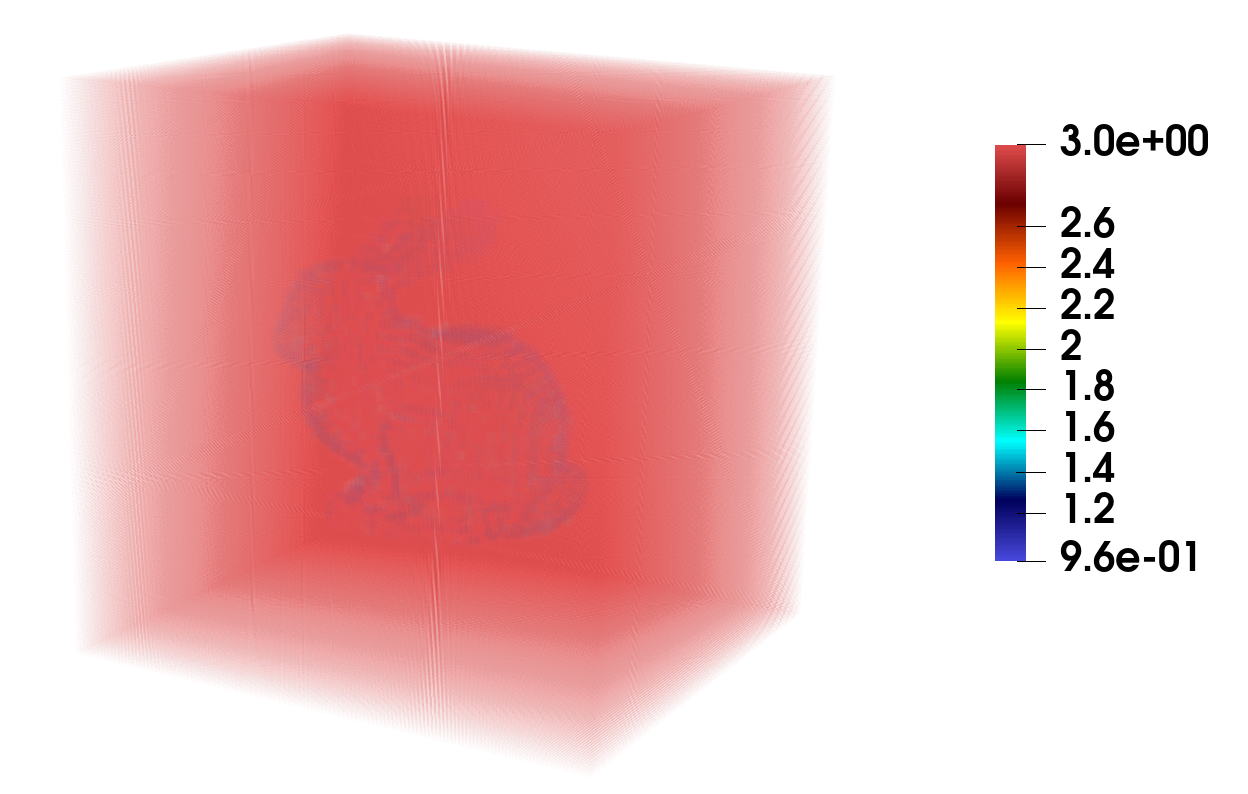}
    \includegraphics[width=0.48\linewidth]{./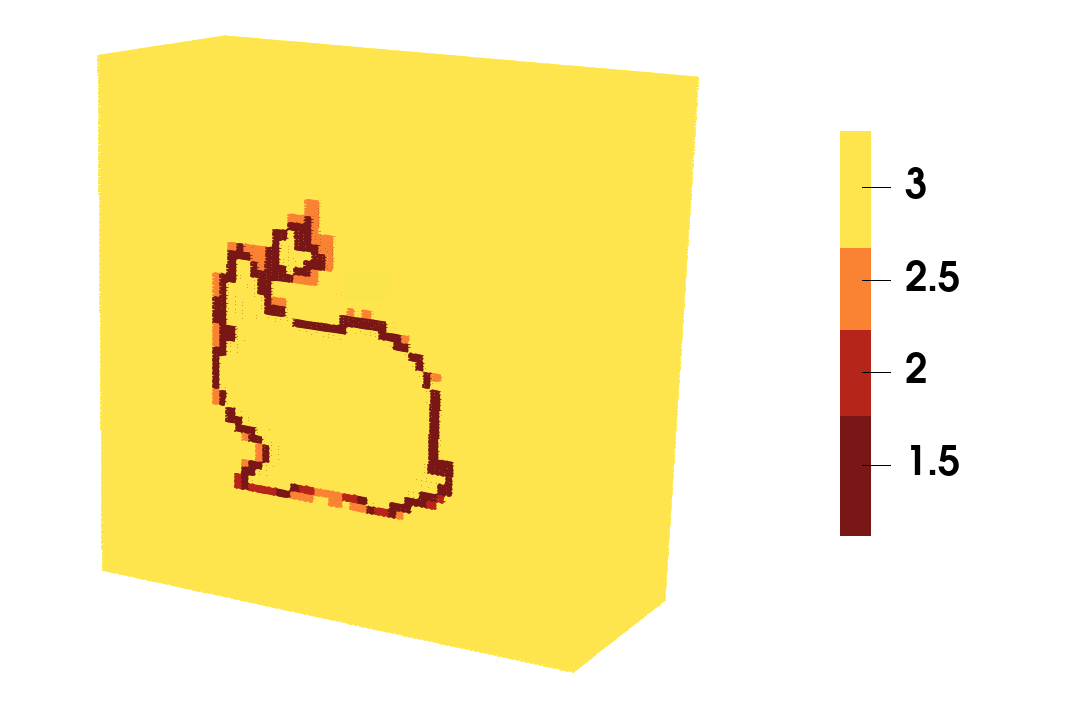}
    \end{subfigure}
  \caption{\label{fig:bunny}
Top: Bunny embedded in the unit cube (left) and jump function (right).
Bottom: Local H\"older exponents, displayed with linear (left) and structured
(right) colormaps. Our algorithm assigns the microlocal space 
$C^{1.5-\frac{d}{2}}=C^0$ to those clusters intersecting the singularity. }
\end{figure}
The bottom row of the figure shows the obtained smoothness chart. As can be
seen, the boundary of the Stanford Bunny is correctly segmented and the right
smoothness classes are attributed.
\subsection{Computational times} 
Finally, to give an idea of the entailed computational cost, we provide the
run-times for all of the considered examples. The hierarchical clustering is
performed using the top-down approach resulting in a computational cost of 
\(\Ocal(N\log N)\). Hence, we cannot expect an overall linear run-time. Even
so, the results presented in table Table~\ref{tab:placeholder_label} show
almost linear behavior, even when comparing across different dimensions and
different data sites.
\begin{table}[h]
    \centering
    \renewcommand{\arraystretch}{1.3} 
    \begin{tabular}{|c|c|c|c|}
        \hline
        \multirow{2}{*}{\textbf{Numerical setting}} & \multirow{2}{*}{\textbf{Number of data sites}} & \multicolumn{2}{c|}{\textbf{Computational time ($s$)}} \\ 
        \cline{3-4}
         &  & \textbf{Samplet tree} & \textbf{Slopes fitting} \\
        \hline\hline
        1D & $1\,000\,000$ & 0.98 & 0.03 \\
        \hline
        Corner square & $2^{11} \times 2^{11}$ & 3.31 & 0.22 \\
        \hline
        Singular points square & $2^{11} \times 2^{11}$ & 3.31 & 0.43 \\
        \hline
        Shepp-Logan \texttt{Phantom} & $1024 \times 1024$ & 0.83 & 0.05 \\
        \hline
        Lugano & $2048 \times 2048$ & 3.36 & 0.38 \\
        \hline
        \texttt{Visioteam} & $2048 \times 2048$ & 3.36 & 0.38 \\
        \hline
        Jump cube & $2^{8} \times 2^{8} \times 2^8$ & 21.77 & 1.77 \\
        \hline
        Corner cube & $2^{8} \times 2^{8} \times 2^8$ & 22.67 & 1.80 \\
        \hline
        Sphere & $4\,000\,000$ & 6.86 & 0.48 \\
        \hline
        Bunny & $8\,000\,000$ & 8.52 & 1.01 \\
        \hline
        Lucy & $1\,400\,000$ & 2.05 & 0.16 \\
        \hline        
    \end{tabular}
\caption{Computational times for samplet tree $\mathcal{T}$ construction and slope fitting (using Algorithms~\ref{alg:DFS} and~\ref{alg:ComputeHolderExponents}) across the different numerical settings.}
    \label{tab:placeholder_label}
\end{table}

\section{Conclusion}\label{sec:conclusions}
In the present paper we have addressed the problem of determining the
local smoothness of non-uniformly sampled signals. To this end, we
have employed samplets, which are a multiresolution analysis of localized
discrete signed measures exhibiting vanishing moments. Taking the vanishing
moment property and the localization as a starting point, we have derived
decay estimates for the coefficients in the samplet expansions in terms
of microlocal spaces. This approach allows to infer the local
H\"older exponents of the signal and to discern different types of features,
such as jumps and edges as well as smooth regions of the signal.
Ultimately this allows to create a smoothness chart for the underlying
signal, which directly gives a segmentation of the signal by thresholding.
The construction of the samplet basis as well as the samplet transform
can be performed in linear time in terms of the number of samples, once
a multilevel hierarchy, typically a hierarchical cluster tree is provided.
Depending if the latter is constructed bottom up or top down, the cost for
the cluster tree is either linear or log-linear. As the creation of the
smoothness chart can also be performed in linear time, we end up with
an (essentially) linear cost approach for smoothness detection.
The numerical results in one, two and three dimensions, ranging
from non-uniformly sampled time series over images to point clouds,
demonstrate the efficiency and the versatility of the approach.

\section*{Acknowledgement}
The authors have been supported by the SNSF starting grant 
``Multiresolution methods for unstructured data'' (TMSGI2 211684).
\bibliographystyle{plain}

\begin{thebibliography}{}

\end{thebibliography}


\begin{thebibliography}{10}

\bibitem{avesani2024multiscale}
S.~Avesani, R.~Kempf, M.~Multerer, and H.~Wendland.
\newblock Multiscale scattered data analysis in samplet coordinates.
\newblock {\em arXiv:2409.14791}, 2024.
\newblock (to appear in SIAM J. Sci. Comput.).

\bibitem{BM24}
P.~Balazs and M.~Multerer.
\newblock Construction of generalized samplets in {B}anach spaces.
\newblock {\em arXiv:2412.00954}, 2024.

\bibitem{DVD}
D.~Baroli, H.~Harbrecht, and M.~Multerer.
\newblock Samplet basis pursuit: Multiresolution scattered data approximation
  with sparsity constraints.
\newblock {\em IEEE Trans. Signal Process.}, 72:1813--1823, 2024.

\bibitem{bertasius2015deepedge}
G.~Bertasius, J.~Shi, and L.~Torresani.
\newblock Deepedge: A multi-scale bifurcated deep network for top-down contour
  detection.
\newblock In {\em Proc. IEEE Conf. Comput. Vis. Pattern Recognit. (CVPR)},
  pages 4380--4389, 2015.

\bibitem{de2020shape}
S.~De~Marchi, W.~Erb, F.~Marchetti, E.~Perracchione, and M.~Rossini.
\newblock Shape-driven interpolation with discontinuous kernels: Error
  analysis, edge extraction, and applications in magnetic particle imaging.
\newblock {\em SIAM J. Sci. Comput.}, 42(2):B472--B491, 2020.

\bibitem{demengel2012functional}
F.~Demengel.
\newblock {\em Functional Spaces for the Theory of Elliptic Partial
  Differential Equations}.
\newblock Springer, London, UK, 2012.

\bibitem{dorafshan2018comparison}
S.~Dorafshan, R.~J. Thomas, and M.~Maguire.
\newblock Comparison of deep convolutional neural networks and edge detectors
  for image-based crack detection in concrete.
\newblock {\em Constr. Build. Mater.}, 186:1031--1045, 2018.

\bibitem{elharrouss2023refined}
O.~Elharrouss, Y.~Hmamouche, A.~K. Idrissi, B.~El~Khamlichi, and
  A.~El~Fallah-Seghrouchni.
\newblock Refined edge detection with cascaded and high-resolution
  convolutional network.
\newblock {\em Pattern Recognit.}, 138:109361, 2023.

\bibitem{ganin2014fields}
Y.~Ganin and V.~Lempitsky.
\newblock ${N}^4$-fields: Neural network nearest neighbor fields for image
  transforms.
\newblock In {\em Asian Conf. Comput. Vis. (ACCV)}, pages 536--551. Springer,
  2014.

\bibitem{grossmann1988wavelet}
A.~Grossmann.
\newblock Wavelet transforms and edge detection.
\newblock In {\em Stoch. Process. Phys. Eng.}, pages 149--157. Springer, 1988.

\bibitem{harbrecht2022samplets}
H.~Harbrecht and M.~Multerer.
\newblock Samplets: Construction and scattered data compression.
\newblock {\em J. Comput. Phys.}, 471:111616, 2022.

\bibitem{HMSS24}
H.~Harbrecht, M.~Multerer, O.~Schenk, and Ch. Schwab.
\newblock Multiresolution kernel matrix algebra.
\newblock {\em Numer. Math.}, 156(3):1085--1114, 2024.

\bibitem{huang2025anisotropic}
W.~Huang, M.~Valsecchi, and M.~Multerer.
\newblock Anisotropic multiresolution analyses for deepfake detection.
\newblock {\em Pattern Recognit.}, 164:111551, 2025.

\bibitem{hwang2015pixel}
J.-J. Hwang and T.-L. Liu.
\newblock Pixel-wise deep learning for contour detection.
\newblock {\em arXiv:1504.01989}, 2015.

\bibitem{jaffard1991pointwise}
S.~Jaffard.
\newblock Pointwise smoothness, two-microlocalization and wavelet coefficients.
\newblock {\em Publ. Mat.}, 35(1):155--168, 1991.

\bibitem{jaffard1996wavelet}
S.~Jaffard and Y.~Meyer.
\newblock {\em Wavelet Methods for Pointwise Regularity and Local Oscillations
  of Functions}, volume 587.
\newblock Amer. Math. Soc., Providence, 1996.

\bibitem{jawerth1994overview}
B.~Jawerth and W.~Sweldens.
\newblock An overview of wavelet based multiresolution analyses.
\newblock {\em SIAM Rev.}, 36(3):377--412, 1994.

\bibitem{jing2022recent}
J.~Jing, S.~Liu, G.~Wang, W.~Zhang, and C.~Sun.
\newblock Recent advances on image edge detection: A comprehensive review.
\newblock {\em Neurocomputing}, 503:259--271, 2022.

\bibitem{jung2009iterative}
J.~H. Jung and V.~R. Durante.
\newblock An iterative adaptive multiquadric radial basis function method for
  the detection of local jump discontinuities.
\newblock {\em Appl. Numer. Math.}, 59(7):1449--1466, 2009.

\bibitem{lenarduzzi2017kernel}
L.~Lenarduzzi and R.~Schaback.
\newblock Kernel-based adaptive approximation of functions with
  discontinuities.
\newblock {\em Appl. Math. Comput.}, 307:113--123, 2017.

\bibitem{li2003wavelet}
J.~Li.
\newblock {\em A Wavelet Approach to Edge Detection}.
\newblock PhD thesis, Sam Houston State Univ., M.Sc. Thesis, 2003.

\bibitem{mallat1999wavelet}
S.~Mallat.
\newblock {\em A Wavelet Tour of Signal Processing}.
\newblock Elsevier, San Diego, US, 1999.

\bibitem{mallat1992characterization}
S.~Mallat and S.~Zhong.
\newblock Characterization of signals from multiscale edges.
\newblock {\em IEEE Trans. Pattern Anal. Mach. Intell.}, 14(7):710--732, 1992.

\bibitem{perona1992steerable}
P.~Perona.
\newblock Steerable-scalable kernels for edge detection and junction analysis.
\newblock In {\em Proc. Eur. Conf. Comput. Vis. (ECCV)}, pages 3--18. Springer,
  1992.

\bibitem{shen2015deepcontour}
W.~Shen, X.~Wang, Y.~Wang, X.~Bai, and Z.~Zhang.
\newblock Deepcontour: A deep convolutional feature learned by positive-sharing
  loss for contour detection.
\newblock In {\em Proc. IEEE Conf. Comput. Vis. Pattern Recognit. (CVPR)},
  pages 3982--3991, 2015.

\bibitem{soria2023dense}
X.~Soria, A.~Sappa, P.~Humanante, and A.~Akbarinia.
\newblock Dense extreme inception network for edge detection.
\newblock {\em Pattern Recognit.}, 139:109461, 2023.

\bibitem{sun2004multiscale}
J.~Sun, D.~Gu, Y.~Chen, and S.~Zhang.
\newblock A multiscale edge detection algorithm based on wavelet domain vector
  hidden markov tree model.
\newblock {\em Pattern Recognit.}, 37(7):1315--1324, 2004.

\bibitem{sun2022survey}
R.~Sun, T.~Lei, Q.~Chen, Z.~Wang, X.~Du, W.~Zhao, and A.~K. Nandi.
\newblock Survey of image edge detection.
\newblock {\em Front. Signal Process.}, 2:826967, 2022.

\bibitem{tarjan1972depth}
R.~Tarjan.
\newblock Depth-first search and linear graph algorithms.
\newblock {\em SIAM J. Comput.}, 1(2):146--160, 1972.

\bibitem{torre1986edge}
V.~Torre and T.~A. Poggio.
\newblock On edge detection.
\newblock {\em IEEE Trans. Pattern Anal. Mach. Intell.}, 2:147--163, 1986.

\bibitem{wang2016edge}
R.~Wang.
\newblock Edge detection using convolutional neural network.
\newblock In {\em Int. Symp. Neural Netw.}, pages 12--20. Springer, 2016.

\bibitem{xie2015holistically}
S.~Xie and Z.~Tu.
\newblock Holistically-nested edge detection.
\newblock In {\em Proc. IEEE Int. Conf. Comput. Vis. (ICCV)}, pages 1395--1403,
  2015.

\bibitem{zhang2002edge}
L.~Zhang and P.~Bao.
\newblock Edge detection by scale multiplication in wavelet domain.
\newblock {\em Pattern Recognit. Lett.}, 23(14):1771--1784, 2002.

\bibitem{zhang2017noise}
W.~Zhang, Y.~Zhao, T.~P. Breckon, and L.~Chen.
\newblock Noise robust image edge detection based upon the automatic
  anisotropic gaussian kernels.
\newblock {\em Pattern Recognit.}, 63:193--205, 2017.

\end{thebibliography}
%

\end{document}